\newtheorem{theorem}{Theorem}
\newtheorem{remark}{Remark}
\newtheorem{assumption}{Assumption}
\newtheorem{proposition}{Proposition}
\newtheorem{definition}{Definition}
\newtheorem{corollary}{Corollary}
\newtheorem{objective}{Objective}
\newcommand{\mbf}{\mathbf}
\newcommand{\be}{\begin{equation}}
\newcommand{\ee}{\end{equation}}
\newcommand{\lbv}{\left\bracevert}
\newcommand{\rbv}{\right\bracevert}
\date{}
\title{On the Synchronization of Second-Order
Nonlinear Systems with Communication Constraints}
\author{A. Abdessameud{}, I. G. Polushin, and A. Tayebi
\thanks{This work was supported by the Natural Sciences and Engineering Research Council of Canada (NSERC).
Corresponding author A. Abdessameud. The authors are with the Department of Electrical and Computer Engineering, University of Western Ontario, London, Ontario, Canada. The third author is also with the Department of Electrical Engineering, Lakehead University, Thunder Bay, Ontario, Canada.
        {\tt\small aabdess@uwo.ca,  ipolushi@uwo.ca, tayebi@ieee.org}. }%
}
\begin{document}

\maketitle
\begin{abstract}
This paper studies the synchronization problem of second-order nonlinear multi-agent systems with intermittent communication in the presence of  irregular communication delays and possible information loss. The control objective is to steer all systems' positions to a common position with a prescribed desired velocity available to only some leaders.  Based on the small-gain framework, we propose a synchronization scheme relying on an intermittent information exchange protocol in the presence of time delays and possible packet dropout. We show that our control objectives are achieved with a simple selection of the control gains provided that the directed graph, describing the interconnection between all systems (or agents), contains a spanning tree. The example of Euler-Lagrange systems is considered to illustrate the application and effectiveness of the proposed approach.
\end{abstract}


\section{Introduction}
%
        Motion coordination of nonlinear multi-agent systems has received an increased interest in the control community due to the potential applications involving groups of robotic systems and autonomous vehicles in general \cite{qu2009cooperative, Ren:Cao:book}. Multi-agent systems control can be formulated as synchronization or consensus problems, where the goal is to drive the networked systems (or agents) to a common state using local information exchange. Other related problems include flocking, swarming, and formation control of mechanical systems. Built around the solutions of the consensus problem of linear multi-agent systems, several coordinated control schemes have been recently developed for second-order nonlinear dynamics, which can describe various mechanical systems, with a particular interest to leaderless synchronization problems \cite{abdess:TAC:2009, Ren:Lagrangian, Wang:flocking:2013, liu2013consensus}, cooperative tracking with full access to the reference trajectory \cite{Spong:Chopra:2007, chung:2009, abdess:TAC:2009}, leader-follower with single leader \cite{su2011adaptive, Mei:Ren:2011, Chen:Lewis:2011, meng2013robust, Meng:Dim:Joh:2014:ieeetro} or multiple leaders \cite{dimos:2009, Mei:Ren:2012, mei2013distributed}, to name only a few. Algebraic graph theory, matrix theory, and Lyapunov direct method have been shown useful tools to address various problems related to the systems dynamics, such as uncertainties, and the interconnection topology between the team members.
              %

%
            In addition, various recent papers address the synchronization problem of nonlinear systems by taking into account delays in the information transfer between agents, which is generally performed using communication channels. In \cite{Spong:Chopra:2007} and \cite{chopra2006passivity}, it has been shown that output synchronization of nonlinear passive systems is robust to constant communication delays if the interconnection graph is directed, balanced and strongly connected. A similar property was shown in \cite{chung:2009} under unbalanced directed graphs using the contraction theorem. In \cite{munz:2011}, a delay-robust control scheme is proposed for relative-degree two nonlinear systems with nonlinear interconnections. With the same assumption on the delays, adaptive synchronization schemes have been proposed in \cite{Nuno11, Wang:2013} for networked robotic systems under a directed graph. In addition to constant delays, a virtual systems approach has been suggested in \cite{abdess:IFAC:2011, aabdess:tay:book} to account for input saturations and to remove the requirements of velocity measurements. Control schemes that consider time-varying communication delays have also been proposed for the attitude synchronization of rigid body systems \cite{Erdong:2008, Abdess:attitude:TAC:2012}, formation control of unmanned aerial vehicles \cite{abdess:VTOL:2011}, and consensus of networked Lagrangian systems \cite{Nuno13}, yet in the case of undirected interconnection graphs. More recently, a small-gain framework is proposed in \cite{Abdessameud:Polushin:Tayebi:2013:ieeetac} for the synchronization of a class of second-order nonlinear systems in the presence of unknown irregular time-varying communication delays under general directed interconnection topologies.
             %

%
            One important problem when dealing with second-order nonlinear systems in the presence of communication delays is to achieve position synchronization, {\it i.e.,} all positions converge to a common value, with some non-zero final velocity. In fact, in most of the above mentioned synchronization laws with communication delays, a static leader or no leader are assumed and position synchronization is achieved with zero final velocity. The only cases where the final velocities match a non-zero value assume a full access to a reference trajectory or to a leader's states (position and velocity). By full access, it is meant that this information is available to all agents without delays. The main challenge in this case resides in the fact that imposing a non-zero final velocity ultimately requires some information on the delays to achieve position synchronization. In fact, a possible solution to this problem might be to explicitly incorporate the delays in the control algorithms as suggested in \cite{Munz:CDC, Zhu:Cheng:2010} for linear second-order multi-agents. This, however, comes with the assumptions of full access to the desired velocity and the communication delays are exactly known.
             %

%
            Another issue that can be observed in all the aforementioned results is the assumption that information is transmitted continuously between agents. In fact, it is not clear if these results still apply in situations where agents are allowed to communicate with their neighbors only during some disconnected intervals (or at some instants) of time. This can be induced by environmental constraints, such as communication obstacles, temporary sensor/communication-link failure, or imposed to the communication process to save energy/communication costs in mobile agents.
            For linear first-order multi-agent systems, the authors in \cite{sun2009consensus} have proposed a consensus algorithm based on the output of a zero-order-hold system, which is updated at instants when the information is received and admits as input the relative positions of interacting agents. In the presence of sufficiently small constant communication delays and bounded packet dropout, the proposed discontinuous algorithm in \cite{sun2009consensus} achieves consensus provided that self-delays are implemented and the non-zero update period of the zero-order-hold system is small.
            A similar approach has been applied for double integrators in \cite{gao2010asynchronous, gao2010consensus}, where asynchronous and synchronous updates of the zero-order-hold systems have been addressed, respectively, without communication delays.
            Here, synchrony means that all agents receive information at the same instants.
            In \cite{wen:duan:2012}, a switching algorithm has been proposed for second-order multi-agents in cases where communication between agents is lost during small intervals of time, yet without communication delays.
            The latter result has been extended to multi-agent systems with general linear dynamics \cite{wen:ren:2013} and globally Lipschitz nonlinear dynamics \cite{wen:l2:2012}, where it has been shown that consensus can be achieved under some conditions on the communication rates and interaction topology. 
              %

%
            In this paper, we consider the synchronization problem of a class of second-order nonlinear systems with intermittent communication in the presence of communication delays and possible packet loss. Here, it is required that all systems achieve position synchronization with some non-zero desired velocity available to only some systems in the group acting as leaders. Based on the small-gain approach, we propose a distributed control algorithm that allows agents to communicate with their neighbors only at some irregular discrete time-intervals and achieve our control objective. A discrete-time consensus algorithm is also used to handle the partial access to the desired velocity. In the case where no desired velocity is assigned to the team, the proposed synchronization algorithm achieves position synchronization with some velocity agreed upon by all agents. In both cases, it is proved that, under some sufficient conditions, synchronization is achieved in the presence of unknown irregular communication delays and packet loss provided that the interconnection topology between agents is described by a directed graph that contains a spanning tree. The derived conditions impose a maximum allowable interval of time during which a particular agent does not receive information from some or all of its neighbors. This interval, however, can be specified arbitrarily with a choice of the control gains. To illustrate the applicability of the proposed approach, we derive a solution to the above problems in the case of networked Lagrangian systems, and simulation results that show the effectiveness of the proposed approach are given.
%
%
\section{Background}\label{SecProbStat}
%

\subsection{Graph theory}\label{section_graph}
%
%
        Let $\mathcal{G}=(\mathcal{N},\mathcal{E})$ be a directed graph, with a set of nodes (or vertices) $\mathcal{N}$, and a set of ordered edges (pairs of nodes) $\mathcal{E}\subseteq\mathcal{N}\times\mathcal{N}$. An edge $(j, i)\in \mathcal{E}$ is represented by a directed link (arc) leaving node $j$ and directed toward node $i$. A directed graph $\mathcal{G}$ is said to contain a spanning tree if there exists at least one node that has a ``directed path'' to all the other nodes in the graph; by a directed path (of length $q$) from $j$ to $i$ is meant a sequence of edges in a directed graph of the form $(j,l_1),(l_1,l_2), \ldots, (l_{q-1}, l_q)$, with $l_q=i$, where for $q>1$ the nodes $j, l_1, \ldots, l_{q-1}\in\mathcal{N}$ are distinct. Node $r$ is called a root of $\mathcal{G}$ if it is the root of a directed spanning tree of $\mathcal{G}$; in this case, $\mathcal{G}$ is said to be rooted at $r$.
%

%
        Given two graphs $\mathcal{G}_1=(\mathcal{N},\mathcal{E}_1)$, $\mathcal{G}_2=(\mathcal{N},\mathcal{E}_2)$ with the same vertex set $\mathcal{N}$, their composition $\mathcal{G}_3:=(\mathcal{N},\mathcal{E}_3)=\mathcal{G}_1\circ \mathcal{G}_2$ is the graph with the same vertex set $\mathcal{N}$ where $(j,i)\in \mathcal{E}_3$ if and only if $(j,l)\in \mathcal{E}_2$ and $(l,i)\in \mathcal{E}_1$ for some $l\in \mathcal{N}$. Composition of any finite number of graphs is defined by induction. In the case where $\mathcal{G}_1$ and $\mathcal{G}_2$ contain self-links at all nodes, the edges of $\mathcal{G}_1$ and $\mathcal{G}_2$ are also edges of $\mathcal{G}_3$. In this case, the definition above also implies that $\mathcal{G}_3$ contains a path from $j$ to $i$ if and only if $\mathcal{G}_2$ contains a path from $j$ to $l$ and $\mathcal{G}_1$ contains a path from $l$ to $i$. A finite sequence of directed graphs $\mathcal{G}_1$, $\mathcal{G}_2$, \ldots, $\mathcal{G}_q$ with the same vertex set is jointly rooted if the composition $\mathcal{G}_q  \circ \mathcal{G}_{q-1}\circ \ldots \circ \mathcal{G}_1$ is rooted. An infinite sequence of graphs $\mathcal{G}_0$, $\mathcal{G}_1$, $\ldots$ is said to be repeatedly jointly rooted if there exists $k^*\in{\mathbb Z}_+$ such that for any $\sigma\in{\mathbb Z}_+$ the finite sequence $\mathcal{G}_\sigma$, $\mathcal{G}_{\sigma+1}, \ldots$, $\mathcal{G}_{\sigma+k^*}$ is jointly rooted. (See~\cite{Cao:etal:2008:1:SIAMJCO} for more details on graph composition).

        A weighted directed graph $\mathcal{G}_{w}$ consists of the triplet $(\mathcal{N},\mathcal{E}, \mathcal{A})$, where $\mathcal{N}$ and $\mathcal{E}$ are, respectively, the sets of nodes and edges defined as above, and $\mathcal{A}$ is the weighted adjacency matrix defined such that $a_{ii}\triangleq0$, $a_{ij}>0$ if  $(j,i)\in\mathcal{E}$, and  $a_{ij}=0$ if $(j,i)\notin \mathcal{E}$. Note that thus defined graph does not contain self-links at any node and will have the same properties as the unweighted graph with the same sets of nodes and edges. The Laplacian matrix $\mathbf{L}:=[l_{ij}]\in\mathbb{R}^{n\times n}$ of the weighted directed graph $\mathcal{G}_w$ is defined such that: $l_{ii}=\sum_{j=1}^n a_{ij}$, and $l_{ij}=-a_{ij}$ for $i\neq j$.

\subsection{Stability Notions and preliminary result}
%
        Consider an affine nonlinear system of the form
            \begin{equation}
            \label{affine001}
            \begin{array}{rcl}
            {\dot x}&=&f(x)+ \sum_{i=1}^p g_i(x)u_i,\\ 
            y_j & = & h_j(x), \quad ~j= 1,\ldots,q,
            \end{array}
            \end{equation}
        where  ${x}\in\mathbb{R}^N$, ${u}_i\in\mathbb{R}^{\tilde{m}_i}$ for $i\in\mathcal{N}_p:=\{1,\ldots,p\}$, ${y}_j\in\mathbb{R}^{\bar{m}_j}$ for $j\in\mathcal{N}_q:=\{1,\ldots,q\}$, and $f(\cdot )$, $g_i(\cdot )$, for $i\in\mathcal{N}_p$, and $h_j(\cdot )$, for $j\in\mathcal{N}_q$, are locally Lipschitz functions of the corresponding dimensions, $f(0)=0$, $h(0)=0$. We assume that for any initial condition $x(t_0)$ and any inputs $u_1(t)$, \ldots, $u_p(t)$ that are uniformly essentially bounded on $[t_0, t_1)$, the corresponding solution $x(t)$ is well defined  for all $t\in [t_0, t_1]$.
        \begin{definition}\cite{sontag:06:1}\label{def_iss}
           A system of the form \eqref{affine001} is said to be input-to-state stable (ISS) if there exist\footnote{ The definition of the class functions $\mathcal{K}$, $\mathcal{K}_{\infty}$, and $\mathcal{K}\mathcal{L}$ can be found in ~\cite{khalil:02}. Also, ${\bar {\mathcal K}}:={\mathcal K}\cup\{{\mathcal O}\}$, ${\bar {\mathcal K}}_{\infty}:={\mathcal K}_{\infty}\cup\{{\mathcal O}\}$, where ${\mathcal O}$ is zero function, ${\mathcal O}(s)\equiv 0$ for all $s\ge 0$.}
                 $\beta\in {\mathcal K}_{\infty}$ and $\gamma_{j}\in {\bar{\mathcal K}}$, $j\in\mathcal{N}_p$, such that the following inequalities hold along the trajectories of the system for any Lebesgue measurable uniformly essentially bounded inputs $u_j$, $j\in\mathcal{N}_p$:
%

%
            	\begin{itemize}
              	\item [i)] {\it uniform boundedness:} ~$\forall~t_0,~ t\in{\mathbb R},~ t\ge t_0$, we have \vspace{-0.2 cm}
              	\begin{equation}\begin{array}{c}\left|x(t)\right|\le \beta\left(|x(t_0)|\right)+\sum\limits_{j=1}^{p} \gamma_{j}\left(\sup\limits_{s\in\left[t_0, t\right)}|u_j(s)|\right),\end{array}\nonumber\end{equation}
%
%
              \item[ii)] {\it asymptotic gain:} 
              	\begin{equation}\begin{array}{c}
              \limsup\limits_{t\to +\infty} \left|x(t)\right|\le \sum\limits_{j=1}^{p}\gamma_{j}\left(\limsup\limits_{t\to +\infty} |u_j(t)|\right).\end{array}\nonumber\end{equation}
            	\end{itemize}
           \end{definition}
%
%
        In the above definition, $|\cdot|$ denotes the Euclidean norm of a vector and $\gamma_j\in\bar{\mathcal{K}}$, $j\in\mathcal{N}_p$, are called the ISS gains. It should be pointed out that for a system of the form \eqref{affine001}, the ISS implies the input-to-output stability  (IOS) \cite{sontag:06:1}, which means that there exist $\beta_i\in {\mathcal K} \mathcal{L}$ and $\gamma_{ij}\in {\bar{\mathcal K}}$, $i\in\mathcal{N}_q$, $j\in\mathcal{N}_p$, such that the inequality
        \begin{equation}\begin{array}{c}\left|y_i(t)\right|\le \beta_i\left(|x(t_0)|, t \right)+ \sum\limits_{j=1}^{p} \gamma_{ij}\left(\sup\limits_{s\in\left[t_0, t\right)}|u_j(s)|\right),\end{array}\nonumber\end{equation}
        holds for all $i\in\mathcal{N}_q$  and $\forall~t_0, ~t\in{\mathbb R},~ t\ge t_0$.
        In this case, the function $\gamma_{ij}\in {\bar {\mathcal K}}$, $i\in\mathcal{N}_q$ and $j\in\mathcal{N}_p$, is called the IOS gain from the input $u_j$ to the output $y_i$. In the subsequent analysis, we will mostly deal with the case where the IOS gains are linear functions of the form $\gamma_{ij}(s):=\gamma_{ij}^0\cdot s$,  where $\gamma_{ij}^0\ge 0$; in this case, we will simply say that the system has linear IOS gains $\gamma_{ij}^0\ge 0$.
%

%
        The convergence analysis in this paper is based on the following small-gain theorem.
         \begin{theorem} \label{theorem001a}
          Consider a system of the form \eqref{affine001}. Suppose the system is IOS with linear IOS gains $\gamma_{ij}^0\ge 0$. Suppose also that each input $u_j(\cdot)$, $j\in\mathcal{N}_p$, is a Lebesgue measurable function satisfying:
                            $u_j(t)\equiv 0$, for $t<0$, 
          and                     \begin{equation}
                            |u_j(t)|\le\sum\limits_{i\in\mathcal{N}_q} {\mu}_{ji}\cdot\sup\limits_{s\in\left[t-\vartheta_{ji}(t), t\right]} \left| y_i(s)\right| +|\delta_j(t)|,
                            \label{commconstraints0010}
                    \end{equation}
          for almost all $t\ge 0$, where $ {\mu}_{ji}\ge 0$,  all $\vartheta_{ji}(t)$ are Lebesgue measurable uniformly bounded nonnegative functions of time, and $\delta_j(t)$ is an uniformly essentially bounded signal. Let $\Gamma:=\Gamma^0\cdot {\mathcal M}\in{\mathbb R}^{q\times q}$, where $\Gamma^0:=\left\{ \gamma^0_{ij} \right\}$, ${\mathcal M}:=  \left\{{\mu}_{ji}\right\}$, $i\in\mathcal{N}_q$, $j\in\mathcal{N}_p$. If $\rho\left(\Gamma \right)<1$, where $\rho\left(\Gamma \right)$ is the spectral radius of the matrix $\Gamma$, then the trajectories of the system (\ref{affine001}) 
          are well defined for all $t\ge 0$ and such that all the outputs $y_i(t)$, $i\in\mathcal{N}_q$, and all the inputs  $u_j(\cdot)$, $j\in\mathcal{N}_p$, are uniformly bounded. If, in addition, $|\delta_j(t)|\to 0$ at $t\to +\infty$, $j\in\mathcal{N}_p$, then $\left|y_i(t)\right|\to 0$, $\left|u_j(t)\right|\to 0$ as $t\to +\infty$ for $i\in\mathcal{N}_q$ and $j\in\mathcal{N}_p$.
           	$\square$
        \end{theorem}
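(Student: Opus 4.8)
The plan is to run the standard small-gain bootstrap: turn the IOS estimate and the interconnection bound \eqref{commconstraints0010} into a pair of coupled inequalities for the running suprema of $|y_i|$ and $|u_j|$, eliminate the inputs to obtain a single linear recursive inequality driven by the matrix $\Gamma$, and then use $\rho(\Gamma)<1$ together with the entrywise nonnegativity of $\Gamma$ to close the loop. First I would fix the maximal interval $[0,T_{\max})$ on which the solution $x$ corresponding to the given inputs is defined; on each compact subinterval $x$ is continuous, hence bounded, so the $u_j$ are essentially bounded there and the IOS estimate applies. For $t\in[0,T_{\max})$ set $Y_i(t):=\sup_{s\in[0,t]}|y_i(s)|$ and let $U_j(t)$ denote the essential supremum of $|u_j(s)|$ over $s\in[0,t]$; both are finite and nondecreasing. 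Since a $\mathcal{KL}$ function is nonincreasing in its second argument, taking the running supremum in the IOS estimate (with $t_0=0$) gives $Y_i(t)\le b_i+\sum_{j}\gamma_{ij}^0\,U_j(t)$ with $b_i:=\beta_i(|x(0)|,0)$. Because $u_j\equiv 0$ on $\{t<0\}$, hence also $y_i\equiv 0$ there (as $h(0)=0$), one has $\sup_{s\in[t-\vartheta_{ji}(t),t]}|y_i(s)|\le Y_i(t)$, so taking the running supremum in \eqref{commconstraints0010} gives $U_j(t)\le\sum_{i}\mu_{ji}\,Y_i(t)+\Delta_j$, where $\Delta_j$ is the (finite) essential supremum of $|\delta_j|$ over $[0,\infty)$. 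In vector form these read $Y(t)\le b+\Gamma^0 U(t)$ and $U(t)\le\mathcal{M}Y(t)+\Delta$ entrywise, and since $\Gamma^0\ge 0$ (as $\gamma_{ij}^0\ge 0$) substitution yields $(I-\Gamma)Y(t)\le b+\Gamma^0\Delta=:c$ for all $t\in[0,T_{\max})$.

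Next I would invoke the spectral hypothesis. Since $\Gamma=\Gamma^0\mathcal{M}\ge 0$ entrywise (as $\mu_{ji}\ge 0$) and $\rho(\Gamma)<1$, the Neumann series $(I-\Gamma)^{-1}=\sum_{k\ge 0}\Gamma^k$ converges with nonnegative entries and $\Gamma^n\to 0$. Iterating $Y(t)\le c+\Gamma Y(t)$ gives $Y(t)\le\sum_{k=0}^{n-1}\Gamma^k c+\Gamma^n Y(t)$ for every $n$; letting $n\to\infty$, with $Y(t)$ a fixed finite vector, yields the $t$-independent bound $Y(t)\le(I-\Gamma)^{-1}c$, hence also $U(t)\le\mathcal{M}(I-\Gamma)^{-1}c+\Delta$, for all $t\in[0,T_{\max})$. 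Thus $x(t)$ remains in a fixed compact set on $[0,T_{\max})$, which rules out finite escape, so $T_{\max}=+\infty$, and the $y_i$ and $u_j$ are uniformly bounded.

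For the convergence claim, assume $|\delta_j(t)|\to 0$ and set $\bar Y_i:=\limsup_{t\to\infty}|y_i(t)|$ and $\bar U_j:=\limsup_{t\to\infty}|u_j(t)|$ (essential limsup for the $u_j$), which are finite by the previous step. Applying the IOS estimate from an arbitrary $t_0$ and letting $t\to\infty$ makes the $\mathcal{KL}$ term vanish and leaves $\bar Y_i\le\sum_j\gamma_{ij}^0$ times the essential supremum of $|u_j|$ on $[t_0,\infty)$; letting $t_0\to\infty$ gives $\bar Y_i\le\sum_j\gamma_{ij}^0\bar U_j$. Taking $\limsup_{t\to\infty}$ in \eqref{commconstraints0010}, using uniform boundedness of the $\vartheta_{ji}$ and $|\delta_j(t)|\to 0$, gives $\bar U_j\le\sum_i\mu_{ji}\bar Y_i$. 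Combining these, $\bar Y\le\Gamma\bar Y$ entrywise, so $0\le\bar Y\le\Gamma^n\bar Y\to 0$, whence $\bar Y=0$ and then $\bar U=0$; that is, $|y_i(t)|\to 0$ and $|u_j(t)|\to 0$ as $t\to+\infty$.

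The main obstacle I expect is not the algebra but the bookkeeping: establishing the a priori bound on the \emph{maximal} interval of existence so that finite escape is genuinely excluded (hence $T_{\max}=+\infty$), and handling the ``almost all $t$'' qualifier in \eqref{commconstraints0010} together with the essential suprema and limsuprema consistently when passing to running suprema and then to the limit. A small but essential point worth stating carefully is that $\rho(\Gamma)<1$ combined with $\Gamma\ge 0$ forces $(I-\Gamma)^{-1}$ to be entrywise nonnegative, which is exactly what lets the small-gain inequalities propagate in the correct direction.
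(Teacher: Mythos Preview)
The paper does not actually prove Theorem~\ref{theorem001a}; it states that the result is a version of \cite[Theorem~1]{Abdessameud:Polushin:Tayebi:2013:ieeetac} and a special case of the result in~\cite{polushin:dashkovskiy:takhmar:patel:13:automatica}, and that the proof follows the same lines and is omitted. Your proposal is exactly the standard small-gain bootstrap those references use (running suprema, the vector inequality $Y\le c+\Gamma Y$, inversion of $I-\Gamma$ via nonnegativity and $\rho(\Gamma)<1$, then the $\limsup$ argument for convergence), so it matches the intended approach and is essentially correct.

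Two small points deserve tightening. First, the clause ``$u_j\equiv 0$ on $\{t<0\}$, hence also $y_i\equiv 0$ there (as $h(0)=0$)'' is not a deduction: the trajectory is only defined for $t\ge 0$, so the value of $y_i$ for negative times is a \emph{convention} (zero extension), not a consequence of $h(0)=0$ and zero inputs. State it as such. Second, from IOS and bounded inputs you obtain bounded \emph{outputs}, not a bounded state, so ``$x(t)$ remains in a fixed compact set on $[0,T_{\max})$'' is not justified by what you have shown. You do not need that step: the paper's standing hypothesis right after~\eqref{affine001} (the solution is well defined on $[t_0,t_1]$ whenever the inputs are uniformly essentially bounded on $[t_0,t_1)$) already forces $T_{\max}=+\infty$ once you have the uniform bound $U(t)\le \mathcal{M}(I-\Gamma)^{-1}c+\Delta$ on $[0,T_{\max})$.
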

        Theorem~\ref{theorem001a} is a version of \cite[Theorem 1]{Abdessameud:Polushin:Tayebi:2013:ieeetac} and is also a special case of the result given in~\cite{polushin:dashkovskiy:takhmar:patel:13:automatica}; in particular, its proof follows the same lines as in the  proof in \cite[Theorem 1]{Abdessameud:Polushin:Tayebi:2013:ieeetac}, and hence, is omitted.

\section{Problem Formulation}
%
\subsection{System Model}
%
            Consider $n$ not necessarily identical second-order nonlinear systems (or agents) governed by  the following dynamics
                \begin{equation}\label{model}\begin{array}{lcl}
                \dot{p}_i(t) &=& v_i(t),\\
                \dot{v}_i(t) &=& F_i(p_i(t), v_i(t), u_i(t)),\end{array}
                \qquad i\in\mathcal{N},
                \end{equation}
            where  $p_i \in\mathbb{R}^m$ and $v_i$ are the position-like and velocity-like states, respectively, $u_i \in\mathbb{R}^m$ are the inputs,
            and $\mathcal{N}:=\{1,\ldots, n\}$. The functions $F_i$ are assumed to be locally Lipschitz with respect to their arguments. Note that equations \eqref{model} can be used to describe the full or part of the dynamics of various physical systems.
%

%
            The systems \eqref{model} are interconnected in the sense that some information can be transmitted between agents using communication channels. This interconnection is represented by a directed graph $\mathcal{G}=(\mathcal{N},\mathcal{E})$, where $\mathcal{N}$ is the set of all agents, and an edge $(j,i)\in\mathcal{E}$ indicates that the $i$-th agent {\it can} receive information from the $j$-th agent; in this case, we say that $j$ and $i$ are neighbors (even though the link between them is directed). While the interconnection graph $\mathcal{G}$ is fixed, the information exchange between agents is not continuous but discrete in time and is subject to communication constraints as described in the next subsection.

\subsection{Communication Process}\label{section_comm}
%
            In this paper, we consider the case where the communication between agents is intermittent and is subject to time-varying communication delays, information losses, and blackout intervals. Specifically, it is assumed that there exists a strictly increasing and unbounded sequence of time instants $t_k:=k T\in {\mathbb R}_+$, $k\in\mathbb{Z}_+= \{0,1,\ldots\}$, where $T>0$ is a fixed sampling period common for all agents, such that each agent is allowed to send its information to all or some of its neighbors at instants $t_k$, $k\in\mathbb{Z}_+$. In addition, for each pair $(j,i)\in\mathcal{E}$, suppose that there exist a sequence of communication delays $(\tau_k^{(j,i)})_{k\in\mathbb{Z}_+}$ that take values in $\{{\mathbb R}_+\cup{+\infty}\}$ such that the information sent by agent $j$ at instant $t_k$ can be available to agent $i$ starting from the instant $t_k+\tau_k^{(j,i)}$. In particular, it is possible that $\tau_k^{(j,i)}={+\infty}$ for some $k\in\mathbb{Z}_+$, which corresponds to a situation where agent $j$ has not sent information at instant $t_k$ to neighbor $i$ at all, or the corresponding information was never received possibly due to packet loss in the communication channel. The following assumption is imposed on the communication process between neighboring agents.
            \begin{assumption}\label{AssumptionDelay01}
                For each $(j,i)\in\mathcal{E}$, there exist numbers $k^*\in{\mathbb N}$, $h \ge 0$, and an infinite strictly increasing sequence ${\mathcal K}^{(j,i)}:= \{k^{(j,i)}_0,k^{(j,i)}_1,\ldots \}\subset \{0,1,\ldots \}$ satisfying
                   \begin{itemize}
                    \item [i)] $k^{(j,i)}_0\le k^*$, and $k^{(j,i)}_{l+1} - k^{(j,i)}_l\le k^*$, $l\in\{0,1,\ldots \}$,
                    \item[ii)] $\tau_k^{(j,i)}\le h$ for each $k\in{\mathcal K}^{(j,i)}$.
                    \end{itemize}
            \end{assumption}

            Assumption~\ref{AssumptionDelay01} essentially means that, for each pair $(j,i)\in\mathcal{E}$, and per any $k^*$ consecutive sampling instants, there exists at least one sampling instant at which agent $j$ has sent information to agent $i$, and this information has been successfully delivered with delay less than or equal to $h$. Note that $h$ is not an imposed upper bound of the delays; in particular, the possible case where  $\tau_\sigma^{(j,i)} > h$ for some $\sigma \notin {\mathcal K}^{(j,i)}$ is not excluded.
            Assumption~\ref{AssumptionDelay01} also implies that, for each pair $(j,i)\in\mathcal{E}$, the maximal interval between two consecutive instants when agent $i$ receives information from agent $j$ is less than or equal to
            \begin{equation}
            \label{h_star} h^*:=k^* T+ h.
            \end{equation}
            It is worth pointing out that $k^*$, $h$, and $h^*$ are considered common to all $(j,i)\in\mathcal{E}$ for simplicity, and can be seen as the maximum of the corresponding parameters defined for each pair $(j,i)\in\mathcal{E}$. Also, the above assumption does not require that all agents broadcast their information to some or all of their prescribed neighbors at the same instants of time.

\subsection{Problem statement}
%
%
            Consider multi-agents \eqref{model} interconnected according to $\mathcal{G}$ and the communication between agents satisfies Assumption~\ref{AssumptionDelay01}. Suppose also that a constant desired velocity $v_d \in\mathbb{R}^m$ is available for a subset ${\mathcal L}\subset {\mathcal N}$ of agents, called leaders. The rest of the systems (belonging to the complementary subset ${\mathcal F}:= {\mathcal N}\setminus {\mathcal L}$) are referred to as followers. Our goal is to design synchronization schemes for the nonlinear multi-agent system \eqref{model} such that the following objectives are attained.
%
            \begin{objective}\label{objective1} In the case $\mathcal{L}\ne \emptyset$, it is required that
                    $p_i(t) - p_j(t)\to 0$, $v_i(t) \to v_d$, as $t\to +\infty$ for all $i,j\in\mathcal{N}$.
            \end{objective}
            \begin{objective}\label{objective2} In the case $\mathcal{L}=\emptyset$, it is required that
                                   $p_i(t) - p_j(t)\to 0$ and $v_i(t) \to v_c$ as $t\to +\infty$ for all $i,j\in\mathcal{N}$ and for some final velocity $v_c\in\mathbb{R}^m$.
             \end{objective}
%
%
                To achieve the above objectives, we adopt an approach that takes its roots from the control of robotic systems \cite{slotine1987adaptive} and has been recently used to address various synchronization problems of
                mechanical systems (see, for instance, \cite{abdess:VTOL:2011, Nuno11, Chen:Lewis:2011, Wang:flocking:2013, Abdessameud:Polushin:Tayebi:2013:ieeetac}).
                To explain this approach, let $v_{r_i}(t)\in \mathcal{C}^1$ be a reference velocity for the $i$-th system, for $i\in\mathcal{N}$. Equations \eqref{model} can be rewritten in the following form,
                    \begin{eqnarray}
                        \label{model_reduced}\dot{p}_i(t) &=& v_{r_i}(t) + e_i(t),\\
                        \label{model_closed_error}\dot{e}_i(t) &=& F_i (p_i, e_i + v_{r_i}, u_i) - \dot{v}_{r_i},
                   \end{eqnarray}
                $i\in\mathcal{N}$, where $e_i:= v_i - v_{r_{i}}$ is the velocity tracking error. Equations \eqref{model_reduced} describe the dynamics of agents in a multi-agent system with $v_{r_i}(t)$ being the reference input, while  $e_i(t)$ is a perturbation term with dynamics described by \eqref{model_closed_error}.   The synchronization problem can now be solved using a two stages approach described as follows. In the first stage, the input $u_i(t)$ in \eqref{model_closed_error} is designed to guarantee the convergence of the error signals $e_i(t)$ to zero. In the second stage, appropriate algorithms for $v_{r_i}(t)$ are designed using the position-like states of the systems such that the trajectories of the dynamic systems \eqref{model_reduced} satisfy Objectives \ref{objective1} or \ref{objective2}. As mentioned in the Introduction, the problem of designing such algorithms for $v_{r_i}(t)$ in the presence of the communication constraints described in Section~\ref{section_comm} is yet unsolved, even in the case where no perturbation term exists, {\it i.e.}, $e_i\equiv 0$. In the presence of nonzero signals $e_i(t)$, the problem becomes more complicated since there typically exists some coupling between the signals $e_i(t)$ and $v_{r_i}(t)$ as they both depend on the states of \eqref{model_reduced}-\eqref{model_closed_error}. Note that, the first step, the design of the control law $u_i$ in \eqref{model_closed_error} that guarantees desirable properties of the error signal $e_i$, for a given $v_{r_i}$ and $\dot{v}_{r_i}$, can be achieved using various existing approaches to tracking control design for nonlinear systems.  This step is not addressed in this work; instead, the following assumption is made.
                \begin{assumption}\label{designcond}
                For each system in \eqref{model} and a given reference velocity signals $v_{r_i}(t)\in \mathcal{C}^1$ and $\dot{v}_{r_i}(t)$, there exists a static or dynamic tracking control law $u_i$ such that the following hold:\vspace{-0.1in}
                             \begin{itemize}
                             \item the error signal $e_i(t)$ is uniformly bounded;
                              \item if $v_{r_i}(t)$ and $\dot{v}_{r_i}(t)$ are globally uniformly bounded, then $e_i(t)\to 0$ as $t\to+\infty$.
                              \end{itemize}
                \end{assumption}
                Under Assumption~\ref{designcond}, the synchronization problem of the nonlinear multi-agent system \eqref{model} is reduced to the design of the reference velocities $v_{r_i}\in \mathcal{C}^1$,  $i\in\mathcal{N}$, such that $\dot{v}_{r_i}$ is well defined (available for feedback) and the trajectories of \eqref{model_reduced} satisfy Objectives \ref{objective1} or \ref{objective2}.

\section{Synchronization Scheme Design}
%
%
                In this section, we present a method for the design of the reference velocities $v_{r_i}(t)$, $i\in\mathcal{N}$, such that Objectives  \ref{objective1} and \ref{objective2} are achieved using intermittent communication between the agents in the presence of time-varying communication delays and information losses. For this, we let $\mbf{p}_j^i(k)$, for each $(j,i)\in\mathcal{E}$ and each $k\in{\mathbb Z}_+$, denote the information that can be transmitted from agent $j$ to agent $i$ at $t = kT$. Specifically, $\mbf{p}_j^i(k):=[p_j(kT), \hat{v}_{d_j}(k), k]$, where $p_j(kT)$ is the position-like state of the $j$-th system at $t=kT$, $\hat{v}_{d_j}(k)$  is a discrete-time\footnote{For simplicity, we use throughout the paper the notation $ x(k)$, $k\in\mathbb{Z}_+$, instead of $x(kT)$ for the discrete-time signals.} estimate of the desired velocity obtained by the $j$-th agent according to an algorithm described below, and $k$ is the time-stamp, {\it i.e.,} the sequence number at which information was sent. Also, for each pair $(j,i)\in\mathcal{E}$ and each time instant $t\geq 0$, we let $k_{ij}(t)$ be the largest integer number such that $\mbf{p}_j^i(k_{ij}(t))=[p_j(k_{ij}(t)T), \hat{v}_{d_j}(k_{ij}(t)), k_{ij}(t)]$ is the most recent information of agent $j$ that is already delivered to agent $i$ at $t$, {\it i.e.,}
              \begin{equation}\label{k_t}
                            k_{ij}(t):=\max\{k\in{\mathbb Z}_+:\; kT+\tau_k^{(j,i)}\le t\}.
              \end{equation}
              Note that the number $k_{ij}(t)$ can be determined by a simple comparison of the received time stamps.
%

%
              Now, for each $i\in\mathcal{N}$, the reference velocity $v_{r_i}(t)$ is designed in the following form,
                 \begin{equation}\label{v_reference}
                    v_{r_i}(t) = \eta_i(t) + {\bar v}_{d_i}(t),
                \end{equation}
              where $ {\bar v}_{d_i}$ is a sufficiently smooth estimate of the desired velocity available for the $i$-th agent, and $\eta_i$ is a synchronization term designed with the purpose of position synchronization between agents. The design of ${\bar v}_{d_i}$ and $\eta_i$ are addressed below in detail.

\subsection{Desired velocity estimation}
%
%
        In this subsection, we present a method for the design of ${\bar v}_{d_i}$ in \eqref{v_reference}. As explained above, each leader has direct access to the desired velocity $v_d \in\mathbb{R}^m$. The followers, on the other hand, do not have direct access to the desired velocity $v_d$; instead, they estimate it through the following discrete-time consensus algorithm that is updated at instants $\sigma T$, for $\sigma\in{\mathbb Z}_+$,
            \begin{eqnarray}\label{updatevk_leader}
                    \hat{v}_{d_i}(\sigma)&\equiv&  v_d\quad \mbox{ for }~i\in {\mathcal L},\\
                \label{updatevk_follower}
                \hat{v}_{d_i}(\sigma+1)&=&
                    \frac{1}{\left\bracevert{N_i(\sigma)}\right\bracevert}\sum\limits_{j\in N_i(\sigma)}\hat{v}_{d_{ij}}(\sigma) \quad\mbox{ for} ~i\in {\mathcal F},
                \end{eqnarray}
      where
      %

%
                \begin{equation}\label{updatevk_f2}
                    \hat{v}_{d_{ij}}(\sigma):=
                    \left\{
                    \begin{array}{ll}
                        \hat{v}_{d_j}(k_{ij}(\sigma T)) & \mbox{ if }~ j\neq i,  \\ 
                        \hat{v}_{d_i}(\sigma)& \mbox{ if }~ j = i. 
                    \end{array}
                    \right.
                \end{equation}

             In the above algorithm \eqref{updatevk_follower}-\eqref{updatevk_f2}, $N_i(\sigma):=\{i\}\cup N_{ij}(\sigma)$, with $$N_{ij}(\sigma):=\{j: (j,i)\in\mathcal{E},~ k_{ij}(\sigma T) > k_{ij}((\sigma-1)T)\},$$
             and $\left\bracevert{N_i(\sigma)}\right\bracevert$ denotes the number of elements in $N_i(\sigma)$. Recall from  \eqref{k_t} that the vector $\hat{v}_{d_j}(k_{ij}(\sigma T))$ is the most recent desired velocity estimate (obtained by agent $j$) that is already available to agent $i$ at instant $\sigma T$. As such, the set $N_{ij}(\sigma)$ denotes the set of the neighbors of the $i$-th follower such that the most recent data from these neighbors has been received during the interval $\left((\sigma-1)T, \sigma T\right]$. Therefore, the update law for the followers \eqref{updatevk_follower}-\eqref{updatevk_f2} is based on each agent's own estimate of the desired velocity as well as on the most recent velocity estimates (received from its neighbors) that have not been already used in the update law at earlier sampling instants. The leaders, on the other hand, do not update their estimates as can be seen from \eqref{updatevk_leader}. It should be noted that the set $N_i(\sigma)$ used in \eqref{updatevk_follower}-\eqref{updatevk_f2} can be obtained by simple comparison between the successfully received time stamps.
             %

%
           Using the discrete-time desired velocity estimates obtained by each agent, we let
               \begin{eqnarray}
            \label{hat_v_filter_leader} {\bar v}_{d_i} &\equiv& v_d \quad\mbox{for}~i\in\mathcal{L},\\
            \label{hat_v_filter}
                 \ddot{\bar v}_{d_i}&=&-k_i^d \dot{\bar v}_{d_i} - k_i^p \left({\bar v}_{d_i}-  \hat{v}_{d_i}\left(\lfloor t/T\rfloor \right)\right)
                 \;\mbox{for}~i\in\mathcal{F},
            \end{eqnarray}
            where $\lfloor a \rfloor$ denotes the integer part of $a\in{\mathbb R}_+$, and $k_i^p$, $k_i^d$ are strictly positive scalar gains. Note that $\bar{v}_{d_i}\in\mathcal{C}^1$. \\
%
             \begin{proposition}\label{prop21}
                  Consider the discrete-time consensus algorithm \eqref{updatevk_leader}-\eqref{updatevk_f2}. Suppose Assumption~\ref{AssumptionDelay01} holds. If $\mathcal{L}\neq\emptyset$ and the directed interconnection graph $\mathcal{G}$ is rooted at $r\in\mathcal{L}$, then $\hat{v}_{d_i}(\sigma)\to v_c = v_d$ as $\sigma\to+\infty$ for all $i\in\mathcal{N}$. Also, if  $\mathcal{L}=\emptyset$ and $\mathcal{G}$ is rooted, then
                  $\hat{v}_{d_i}(\sigma)\to v_c$ as $\sigma\to+\infty$ for all $i\in\mathcal{N}$, for some $v_c\in\mathbb{R}^m$. In both cases, ${\bar v}_{d_i}(t)$ and $\dot{\bar v}_{d_i}(t)$ are uniformly bounded and ${\bar v}_{d_i}(t) \to v_c$ as $t\to +\infty$, for all $i\in\mathcal{N}$.~$\square$
            \end{proposition}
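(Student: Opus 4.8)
\emph{Proof strategy.} The plan is to recognize the recursion~\eqref{updatevk_leader}--\eqref{updatevk_f2} as a \emph{delayed} averaging iteration with row-stochastic weights, to convert it into an ordinary (non-delayed) averaging iteration on an augmented state by exploiting Assumption~\ref{AssumptionDelay01}, and then to invoke the consensus theory for repeatedly jointly rooted graph sequences (in the spirit of~\cite{Cao:etal:2008:1:SIAMJCO}); the smoothing filter~\eqref{hat_v_filter} is handled at the end by elementary linear-systems estimates. Since the update acts componentwise on $\mathbb R^m$ it suffices to treat the scalar case. First I would record the consequence of Assumption~\ref{AssumptionDelay01} (cf.~\eqref{h_star}) that there is an integer $d^*$ with: (a) whenever agent $i$ uses a value of some $j$ in~\eqref{updatevk_follower}--\eqref{updatevk_f2}, that value is at most $d^*$ sampling periods old; and (b) for every $(j,i)\in\mathcal E$ and every $\sigma_0$, agent $i$ uses a fresh value of $j$ at some step $\sigma\in(\sigma_0,\sigma_0+d^*]$. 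I would also note that for each follower,~\eqref{updatevk_follower} is a convex combination, with all weights no smaller than $1/n$, of that agent's own current estimate and the most recently received estimates of the neighbors that just delivered fresh data, while each leader keeps $\hat v_{d_i}(\sigma)\equiv v_d$.

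Next I would augment the state: extending each $\hat v_{d_j}(\cdot)$ by its value at $0$ for negative arguments and stacking the agents' estimates together with their $d^*$ most recent past values into a vector $z(\sigma)$ of fixed dimension, the recursion becomes $z(\sigma+1)=D(\sigma)\,z(\sigma)$, where by~(a) the matrix $D(\sigma)$ is row-stochastic, its positive entries are bounded below by $1/n$, the diagonal entry at every ``level-$0$'' node $(i,0)$ is positive (a self-loop), the row of each level-$\ell$ node with $\ell\ge1$ is a pure shift, and the row of each level-$0$ leader node is a standard basis vector. Row-stochasticity makes $\|z(\sigma)\|_\infty$ nonincreasing, so all the $\hat v_{d_i}(\sigma)$ are uniformly bounded. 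Let $\bar{\mathcal G}(\sigma)$ be the directed graph associated with $D(\sigma)$.

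The crux is to show that the sequence $\{\bar{\mathcal G}(\sigma)\}$ is repeatedly jointly rooted, with root node $(r,0)$, where $r$ is a root of $\mathcal G$ belonging to $\mathcal L$ when $\mathcal L\ne\emptyset$ and an arbitrary root of $\mathcal G$ when $\mathcal L=\emptyset$. The building blocks are: (i) the shift rows yield directed paths from $(j,0)$ to $(j,d^*)$ over any $d^*$ consecutive steps; (ii) every level-$0$ node carries a self-loop, so a value present at node $(j,0)$ remains available there at all later instants; and (iii) by~(b), for each edge $(j,i)\in\mathcal E$ and each $\sigma_0$ there is a step $\sigma\in(\sigma_0,\sigma_0+d^*]$ at which $\bar{\mathcal G}(\sigma)$ contains an edge from some node $(j,\ell)$, $\ell\le d^*$, to node $(i,0)$. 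Climbing a directed spanning tree of $\mathcal G$ rooted at $r$ and chaining (i)--(iii) node by node, one checks that there is an integer $k^{**}$, depending only on $n$ and $d^*$, such that for every $\sigma_0$ the composition $\bar{\mathcal G}(\sigma_0+k^{**})\circ\cdots\circ\bar{\mathcal G}(\sigma_0)$ contains a directed path from $(r,0)$ to every node; this is the repeatedly-jointly-rooted property, and it is the delayed-iteration analogue of the construction in~\cite{Cao:etal:2008:1:SIAMJCO}. This bookkeeping is the only delicate point; everything else is routine.

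To conclude, I would use the standard fact that a product of row-stochastic matrices whose positive entries are uniformly bounded below and whose associated graph sequence is repeatedly jointly rooted satisfies: the products $D(\sigma)D(\sigma-1)\cdots D(\sigma_1)$ converge, as $\sigma\to+\infty$, to a rank-one stochastic matrix. Hence $z(\sigma)$ converges to a vector all of whose entries equal a common scalar $v_c$, i.e.\ $\hat v_{d_i}(\sigma)\to v_c$ for all $i\in\mathcal N$. When $\mathcal L\ne\emptyset$, the level-$0$ leader components of $z(\sigma)$ equal $v_d$ for every $\sigma$ by~\eqref{updatevk_leader}, which forces $v_c=v_d$; this settles the convergence of the $\hat v_{d_i}$ in both cases. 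For the smoothing stage: for a leader $\bar v_{d_i}\equiv v_d=v_c$ and $\dot{\bar v}_{d_i}\equiv0$; for a follower,~\eqref{hat_v_filter} is an exponentially stable LTI system---its characteristic polynomial $s^2+k_i^d s+k_i^p$ is Hurwitz since $k_i^p,k_i^d>0$---with unit DC gain, driven by the bounded piecewise-constant signal $t\mapsto\hat v_{d_i}(\lfloor t/T\rfloor)$ which tends to $v_c$; input-to-state stability of the filter gives uniform boundedness of $\bar v_{d_i}$ and $\dot{\bar v}_{d_i}$, and rewriting the dynamics for $\bar v_{d_i}-v_c$ with the now-vanishing forcing $k_i^p(\hat v_{d_i}(\lfloor t/T\rfloor)-v_c)$ and a standard convolution bound gives $\bar v_{d_i}(t)\to v_c$ and $\dot{\bar v}_{d_i}(t)\to0$, as claimed. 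The main obstacle is the third step, the verification of repeated joint rootedness of the augmented graph sequence; once that is in place, the conclusion follows from known results.
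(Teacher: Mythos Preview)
Your proposal is correct, and the overall architecture---reduce to a delayed row-stochastic averaging iteration, establish repeated joint rootedness, invoke the Cao--Morse--Anderson consensus machinery, then handle the linear smoothing filter---matches the paper. The difference is in how the delays are absorbed. The paper keeps the iteration on the original $n$ nodes, rewrites it in the form $\hat v_{d_i}(\sigma+1)=\frac{1}{|\bar N_i(\sigma)|}\sum_{j\in\bar N_i(\sigma)}\hat v_{d_j}(\sigma-\hat\tau^{(j,i)}(\sigma))$ with $\hat\tau^{(j,i)}(\sigma)\le\lceil h^*/T\rceil$, checks that the (non-augmented) interaction graphs $\mathcal G_s(\sigma)$ are repeatedly jointly rooted by showing that every $\lceil h^*/T\rceil$-step composition contains all edges of a suitably modified $\mathcal G_s$, and then directly applies the \emph{delay-aware} consensus result of~\cite{Cao:etal:2008:2:SIAMJCO}. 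You instead do the state augmentation explicitly, lifting to $n(d^*{+}1)$ nodes and applying the \emph{delay-free} theory of~\cite{Cao:etal:2008:1:SIAMJCO}. Your route is more self-contained and makes the mechanism transparent, but the bookkeeping for repeated joint rootedness on the augmented graph is heavier; the paper's route is shorter because that work is outsourced to the cited theorem. The treatment of the leader case (forcing $v_c=v_d$ since leader components are frozen at $v_d$) and of the filter~\eqref{hat_v_filter} (Hurwitz second-order system with vanishing forcing) is essentially the same in both.
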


\subsection{Design of the synchronization terms}
%
%
            In this subsection, the design of the synchronization term $\eta_i$ in \eqref{v_reference} is addressed. For this, let $\mathcal{A}=[a_{ij}]\in\mathcal{R}^{n\times n}$ be an arbitrary weighted adjacency matrix (Defined in Section~\ref{section_graph}) assigned to the graph $\mathcal{G}$; the resulting weighted directed graph is denoted by $\mathcal{G}_w = (\mathcal{N}, \mathcal{E}, \mathcal{A})$. Let $\mathcal{N}_i:=\{j\in \mathcal{N}:\; (j,i)\in\mathcal{E}\}$ denote the set of neighbors of agent $i$ in $\mathcal{G}_w$. Also, let $\kappa_i:=\sum_{j=1}^n a_{ij}$ and $\mathcal{N}^\sharp:=\{ i\in \mathcal{N}\colon\; \kappa_i \neq 0\}$ denote the subset of all agents that have at least one incoming link in $\mathcal{G}_w$.
  %

%
            Consider the following design of the synchronization term $\eta_i$ in \eqref{v_reference}
            \begin{equation}\label{filter}
                \left\{
                \begin{array}{ccl}
                            \dot{\eta}_i &=& - k_i^{\eta} \eta_i - \lambda_i (p_i - \psi_i), \\
                            \dot{\psi}_i &=& - \psi_i + {\bar v}_{d_i} + \frac{1}{\kappa_i}\sum_{j\in\mathcal{N}_i} a_{ij} p^{(i)}_{j}(t),
                \end{array}
                            \right.
                \end{equation}
            for $i\in \mathcal{N}^\sharp$,
                \begin{equation}\label{eta_zeros}
                    \dot{\eta}_i = \eta_i =  0\quad\mbox{for}~i\in\mathcal{N} \setminus \mathcal{N}^\sharp,
                \end{equation}
            where $k_i^{\eta}$, $\lambda_i$ are strictly positive scalar gains, $\bar{v}_{d_i}$ is defined in \eqref{hat_v_filter_leader}-\eqref{hat_v_filter}, and the vector $p^{(i)}_{j}(t)$, for all $j\in\mathcal{N}_i$, is an estimate of the current position of the $j$-th agent defined using the most recent information available to the $i$-th agent at $t$ as
            \begin{eqnarray}\label{qijversion2_1}
                    p^{(i)}_{j}(t)&:=& p_j(k_{ij}(t) T)+  \epsilon_{ij}(t),\\
                    \label{epsilon}\epsilon_{ij}(t)&:=& \hat{v}_{d_j} (k_{ij}(t))\cdot (t - k_{ij}(t) T).
                \end{eqnarray}
            with $k_{ij}(t)$ being defined in \eqref{k_t}.

            \begin{remark}
            Note that, due to the intermittent and delayed nature of the communication process, we have considered a dynamic design for the synchronization terms $\eta_i$. This guarantees that $\eta_i\in\mathcal{C}^1$, which is difficult to realize using static synchronization terms in view of the irregularities of the information received by each agent. In addition, the vectors $\eta_i$ are designed as in \eqref{filter}-\eqref{eta_zeros} such that the closed loop system \eqref{model_reduced} with \eqref{v_reference} and \eqref{filter}-\eqref{eta_zeros} for each agent is IOS with arbitrary IOS gains. As will be made clear in the next subsection, by employing the small gain theorem, Theorem~\ref{theorem001a}, we will show that our control objectives are achieved under some conditions that can be always satisfied. 
            \end{remark}

\subsection{Main result}
            The following theorem describes the conditions under which Objectives~\ref{objective1} and~\ref{objective2} are achieved.
        \begin{theorem}\label{theorem2}
            Consider the network of $n$ systems \eqref{model}, where the interconnection topology is described by a directed graph $\mathcal{G}=(\mathcal{N},\mathcal{E})$ and the communication process between the systems satisfies Assumption~\ref{AssumptionDelay01}.
            Suppose that each system is controlled by a control law $u_i$ satisfying Assumption~\ref{designcond}, where the corresponding reference velocity $v_{r_i}(t)$ is generated by \eqref{v_reference}, with \eqref{updatevk_leader}-\eqref{hat_v_filter} and \eqref{filter}-\eqref{epsilon}.
            Let the control gains be selected such that
                            \begin{equation}\label{small_gain_condition01}
                           	\mu_i >  1+2\cdot h^*, \quad~\mbox{for}~i\in\mathcal{N}^\sharp,
                        \end{equation}
            where $h^*>0$ is defined by \eqref{h_star}, and $$\mu_i := -\max\left({\mathcal Re} (\mu_{i,1}), {\mathcal Re} (\mu_{i,2})\right),$$
             where $\mu_{i,1}$, $\mu_{i,2}$ are the roots of $x^2+k_i^{\eta}x+\lambda_i = 0$. Then, for arbitrary initial conditions, we have
            \begin{itemize}
            \item  Objective~\ref{objective1} is achieved if $\mathcal{G}$ contains a spanning tree with a root \, $r\in{\mathcal L}$.
            \item   Objective~\ref{objective2} is achieved if $\mathcal{G}$ contains a spanning tree.
              \end{itemize}
	      \end{theorem}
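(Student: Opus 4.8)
The plan is to reduce the statement---via Assumption~\ref{designcond} and Proposition~\ref{prop21}---to a two-pass application of the small-gain Theorem~\ref{theorem001a}. Proposition~\ref{prop21} already supplies the desired-velocity layer: $\bar v_{d_i},\dot{\bar v}_{d_i}$ are uniformly bounded, $\bar v_{d_i}(t)\to v_c$ with $v_c=v_d$ precisely when $\mathcal{L}\ne\emptyset$ and the root is a leader, and $\hat v_{d_i}(\sigma)\to v_c$; under Assumption~\ref{AssumptionDelay01} the discrete recursion \eqref{updatevk_follower}--\eqref{updatevk_f2} contracts over blocks of $k^*$ steps, so the residuals $\bar v_{d_i}-v_c$ and $\hat v_{d_i}-v_c$ are vanishing (and summable). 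These residuals, together with $e_i$ (uniformly bounded by Assumption~\ref{designcond}), are the signals that will play the role of the disturbances $\delta_j$ of Theorem~\ref{theorem001a}.

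Next I would pass to deviation coordinates that cancel the common drift: choose a reference $\pi(t)$ with $\dot\pi\equiv v_c$ (tied, in the leader case, to the trajectory of the leading root, so that the one node possibly lacking incoming links is anchored) and set $\tilde p_i:=p_i-\pi$, $\tilde\psi_i:=\psi_i-\pi$. Since $\kappa_i^{-1}\sum_{j\in\mathcal{N}_i}a_{ij}=1$, $\pi$ drops out of \eqref{filter}, leaving for each $i\in\mathcal{N}^\sharp$ the equations $\dot\eta_i=-k_i^{\eta}\eta_i-\lambda_i(\tilde p_i-\tilde\psi_i)$, $\dot{\tilde p}_i=\eta_i+(\bar v_{d_i}-v_c)+e_i$, $\dot{\tilde\psi}_i=-\tilde\psi_i+(\bar v_{d_i}-v_c)+\kappa_i^{-1}\sum_{j\in\mathcal{N}_i}a_{ij}\tilde p_j^{(i)}$, where by \eqref{qijversion2_1}--\eqref{epsilon} and \eqref{k_t} one has $\tilde p_j^{(i)}(t)=\tilde p_j(t)-\int_{k_{ij}(t)T}^{t}\big(\eta_j+e_j+(\bar v_{d_j}-v_c)\big)\,ds+(\hat v_{d_j}(k_{ij}(t))-v_c)(t-k_{ij}(t)T)$, the integration window having length at most $h^*$ by \eqref{h_star}. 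The design \eqref{filter} is made precisely so that the $(\eta_i,\,\tilde p_i-\tilde\psi_i)$ part of this system has characteristic polynomial exactly $x^2+k_i^{\eta}x+\lambda_i$, hence is Hurwitz with stability margin $\mu_i$; a transition-matrix (or weighted-Lyapunov) estimate then renders each agent's closed loop IOS with a \emph{linear} gain governed by $1/\mu_i$---hence, as noted in the Remark, arbitrarily small for a suitable choice of $k_i^{\eta},\lambda_i$---while the only effect of the irregular delayed communication is that neighbours' signals are seen over windows of length at most $h^*$, so that the coupling obeys a bound of the form \eqref{commconstraints0010} with $\vartheta_{ij}(t)\le h^*$ and interconnection coefficients $\mu_{ij}=(1+2h^*)\,a_{ij}/\kappa_i$: the ``$1$'' is the (delayed) current neighbour position, the ``$2h^*$'' collects the two output-dominated pieces $\int\eta_j$, $\int e_j$ of the dead-reckoning error over the ageing window, and the remaining, summable, pieces go into $\delta_i$.

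With these data, $\Gamma=\Gamma^0\mathcal{M}$ has row sums at most $\max_{i\in\mathcal{N}^\sharp}\tfrac{1+2h^*}{\mu_i}$ (using $\sum_{i'}a_{ii'}=\kappa_i$), which is $<1$ by \eqref{small_gain_condition01}, so $\rho(\Gamma)\le\|\Gamma\|_{\infty}<1$. A first pass of Theorem~\ref{theorem001a}---with $e_i$, $\bar v_{d_i}-v_c$, $\hat v_{d_i}-v_c$ treated as merely bounded disturbances---gives uniform boundedness of all outputs and coupling signals, hence of $\eta_i$ and $p_i-\psi_i$, hence of $v_{r_i}=\eta_i+\bar v_{d_i}$ and of $\dot v_{r_i}$; Assumption~\ref{designcond} then upgrades this to $e_i(t)\to0$, so all the disturbances vanish and a second pass of Theorem~\ref{theorem001a} yields $\eta_i(t)\to0$, $(p_i-\psi_i)(t)\to0$, $e_i(t)\to0$, whence $v_i=\eta_i+\bar v_{d_i}+e_i\to v_c$. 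Finally, $p_i\to\psi_i$ while, by \eqref{filter} with $\eta_i$ and $\bar v_{d_i}-v_c$ now vanishing and $\tilde p_j^{(i)}-\tilde p_j\to0$, the $\psi_i$ asymptotically obey a continuous consensus recursion on $\mathcal{G}_w$; since $\mathcal{G}$ contains a spanning tree (with the root anchored to $v_d$ in the leader case) this propagates agreement to all agents, so $p_i-p_j\to0$ and, in the leader case, $v_c=v_d$ by Proposition~\ref{prop21}---Objective~\ref{objective1}; in the leaderless case the same argument gives Objective~\ref{objective2} with $v_c$ the leaderless consensus value.

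I expect the genuine obstacle to be the estimate in the second paragraph: extracting the IOS gain with a constant close enough to $1/\mu_i$ (a larger $M_i/\mu_i$ would spoil the clean threshold $1+2h^*$), in particular in the complex-root regime of $x^2+k_i^{\eta}x+\lambda_i$, while simultaneously closing the loop around the only qualitatively specified error dynamics $e_i$ of Assumption~\ref{designcond}. It is precisely that qualitative character of $e_i$ which forces the two-pass structure---bounded disturbances first, vanishing disturbances second---and which makes it necessary, in the leader case, for the unique node without incoming links to carry $v_d$ itself, i.e.\ to be a leader.
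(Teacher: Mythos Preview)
Your overall strategy---reduce to Theorem~\ref{theorem001a} via Proposition~\ref{prop21} and Assumption~\ref{designcond}, then a two-pass boundedness/convergence argument---matches the paper, but the change of variables you propose does not close the small-gain loop with the gain you claim. Subtracting a common drift $\pi(t)$ with $\dot\pi=v_c$ leaves the $\tilde\psi_i$-equation driven by the \emph{position-level} signal $\tfrac{1}{\kappa_i}\sum_{j} a_{ij}\tilde p_j^{(i)}$, with $\tilde p_j^{(i)}=\tilde p_j+O(h^*)$. If your output for agent~$i$ includes $\tilde p_i$, then the cascade gain from this input to $\tilde p_i=(\tilde p_i-\tilde\psi_i)+\tilde\psi_i$ is at least~$1$ from the unit-gain $\tilde\psi_i$-filter alone, so the row sums of $\Gamma$ exceed~$1$ regardless of $\mu_i$; if instead your output is only $\eta_i$ (or $\tilde p_i-\tilde\psi_i$), then the term $\tilde p_j$ appearing in the coupling is not dominated by any output and a bound of the form \eqref{commconstraints0010} is unavailable. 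Either way the threshold $1+2h^*$ cannot be recovered from this decomposition. (Note also that $e_j$ is a disturbance governed only by Assumption~\ref{designcond}, not an output, so it belongs in $\delta_j$, not among the ``output-dominated pieces'' contributing to $\mu_{ji}$.)

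The paper uses a different shift, and it is the crux of the argument: it sets $\tilde p_i:=p_i-\psi_i$ and $\tilde\psi_i:=\psi_i-\tfrac{1}{\kappa_i}\sum_j a_{ij}p_j$. Differentiating the second definition produces $-\tfrac{1}{\kappa_i}\sum_j a_{ij}v_j$, and with $v_j=\eta_j+\bar v_{d_j}+e_j$ the coupling becomes a \emph{velocity-level} signal in~$\eta_j$ plus vanishing terms; the resulting inputs $\varepsilon_i,\phi_i$ of \eqref{dot_eta}--\eqref{dot_tilde_psi} contain no position-type quantity. With the single output $y_i=\eta_i$, the cascade yields IOS gains $2/\mu_i$ and $1/\mu_i$ from $\varepsilon_i$ and $\phi_i$ respectively, while $|\varepsilon_i|\le\sum_j\tfrac{a_{ij}h^*}{\kappa_i}\sup|\eta_j|+|\delta_{2i}|$ and $|\phi_i|\le\sum_j\tfrac{a_{ij}}{\kappa_i}|\eta_j|+|\delta_{2i-1}|$; Gershgorin then gives exactly $\sum_j\bar\gamma_{ij}\le(1+2h^*)/\mu_i<1$. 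Position agreement is not obtained via an asymptotic consensus argument on $\psi_i$ but directly: in the paper's variables $\kappa_i(\tilde p_i+\tilde\psi_i)=\sum_j a_{ij}(p_i-p_j)$, so once ISS with vanishing inputs forces $\tilde p_i,\tilde\psi_i\to0$ one has $(\mathbf L\otimes\mathbf{I}_m)p\to0$, and the spanning-tree property of $\mathbf L$ finishes. The idea you are missing is to subtract the \emph{neighbour average} from $\psi_i$, not a common drift, so that the interconnection lives at the $\eta$-level where the $1/\mu_i$ gain actually applies.
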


\begin{proof}
        First, it should be noted from \eqref{v_reference}, \eqref{updatevk_leader}-\eqref{hat_v_filter}, and \eqref{filter}-\eqref{epsilon} that $v_{r_i}\in\mathcal{C}^1$, and $\dot{v}_{r_i}(t)= \dot{\eta}_i + \dot{\bar{v}}_{d_i}$ can be obtained from the solution of the dynamic systems \eqref{hat_v_filter} and \eqref{filter}, and is available for feedback. Therefore, applying the control law that satisfies Assumption~\ref{designcond} in \eqref{model} guarantees that the velocity tracking error $e_i(t) = v_i(t) - v_{r_i}(t)$ is uniformly bounded.
 %

%
        For each $i\in\mathcal{N}^\sharp$, let
            \begin{equation}\label{tilde_q_1}\begin{array}{cc}
              	  \tilde{p}_i := p_i - \psi_{i},  &\quad {\tilde{\psi}}_{i} := {\psi}_{i}- \frac{1}{\kappa_i}\sum_{j=1}^n a_{ij}p_{j}.\end{array}
            \end{equation}
        Using the relation $e_i = (v_i - \eta_i - \bar{v}_{d_i})$ with \eqref{filter}, one can write
        \begin{align}
                \label{dot_eta}\dot{\eta}_i=&~ -k_i^{\eta} \eta_i - \lambda_i \tilde{p}_i \\
                \label{dot_tilde_p}\dot{\tilde{p}}_i = & ~ \eta_i   + \tilde{\psi}_i + \varepsilon_i\\
                \label{dot_tilde_psi}\dot{\tilde{\psi}}_{i} = & -\tilde{\psi}_{i}- \varepsilon_i + \phi_i
        \end{align}
        for $i\in\mathcal{N}^\sharp$, with
         \begin{eqnarray}
            \label{phi}\phi_i &=& e_i - \frac{1}{\kappa_i}\sum_{j=1}^n a_{ij} \left( v_j - {\bar v}_{d_i}\right),\\
            \label{varepsilon}\varepsilon_i &=& e_i + \frac{1}{\kappa_i}\sum_{j\in\mathcal{N}_i} a_{ij}( p_{j} - p^{(i)}_{j}).
            \end{eqnarray}
         We can verify that each system \eqref{dot_eta}-\eqref{dot_tilde_psi} with output $\eta_i$ is IOS with respect to the input vectors $\phi_i$ and $\varepsilon_i$. This follows by noticing that the following estimates
        \begin{equation}\label{estimate_tildepsi}
                              	\left|
                              	{\tilde {\psi}}_{i}(t) \right|
                      	\le
                              	~e^{-(t-t_0)}\left|
                              	{\tilde {\psi}}_{i}(t_0)\right|
                          	+
                           	\sup\limits_{\varsigma\in[t_0, t]}\left| \varepsilon_i(\varsigma) \right|
                          	+
                             	\sup\limits_{\varsigma\in[t_0, t]}\left| \phi_i(\varsigma) \right|,
                	\end{equation}
            \begin{align}\label{estimate_eta}
                          	\left|\begin{array}{c}
                          	\eta_{i} (t) \\ {\tilde {p}}_{i}(t)
                          	 \end{array}\right|
                  	\le&
                          	~e^{-\mu_i(t-t_0)}\left|\begin{array}{c}
                          	\eta_i(t_0) \\ {\tilde p}_{i}(t_0)\end{array}\right|
                      	+
                          	\frac{1}{\mu_i}\sup\limits_{\varsigma\in[t_0, t]}\left| \tilde{\psi}_{i} (\varsigma) \right| \nonumber\\
                   +&
                          	~\frac{1}{\mu_i} \sup\limits_{\varsigma\in[t_0, t]}\left|  \varepsilon_{i} (\varsigma) \right|,
            	\end{align}
        hold for all $t\geq t_0$, where $\mu_i$ is defined in Theorem \ref{theorem2}. More precisely, inequality \eqref{estimate_tildepsi} indicates that system \eqref{dot_tilde_psi} is ISS with respect to the inputs $\varepsilon_i$ and $\phi_i$, with unity ISS gains. Also, \eqref{estimate_eta} implies that \eqref{dot_eta}-\eqref{dot_tilde_p} is ISS with respect to the inputs $\tilde{\psi}_i$ and $\phi_i$, with ISS gains both equal to $\frac{1}{\mu_i}$. Since the cascade connection between two ISS systems is ISS, we conclude that \eqref{dot_eta}-\eqref{dot_tilde_psi} is ISS. As a result, the system \eqref{dot_eta}-\eqref{dot_tilde_psi} with output $\eta_i$ is IOS with respect to the inputs $\varepsilon_i$ and $\phi_i$ with linear IOS gains equal to $\frac{2}{\mu}_i$ and $\frac{1}{\mu_i}$, respectively.
%

%
        %
        Therefore, all the systems \eqref{dot_eta}-\eqref{phi}, for $i\in\mathcal{N}^\sharp$, can be regarded as a system with $n^\sharp$ outputs, given by $y_i = \eta_i$, $i\in\mathcal{N}^\sharp$, and $2n^\sharp$ inputs that can be ordered as: $u_{2i}:=\varepsilon_i$, $u_{2i-1}:=\phi_i$, for $i\in\mathcal{N}^\sharp$, where $n^\sharp:= \lbv \mathcal{N}^\sharp \rbv$ denotes the number of elements in $\mathcal{N}^\sharp$. From the above analysis, we can conclude that such system is IOS, with IOS gain matrix $\Gamma^0:=\left\{ \gamma^0_{il} \right\}\in\mathbb{R}^{n^\sharp\times 2n^\sharp}$ given by
            \begin{equation*}
                \gamma^0_{il}=
                \begin{cases}
                    { 1 }/{ \mu_i} &  \text{ if} \; l=2i-1, \; i\in\mathcal{N}^\sharp, \\
                    { 2}/{ \mu_i} & \text{ if}  \; l=2i, \; i\in\mathcal{N}^\sharp, \\
                    0 & \text{ otherwise. }
                \end{cases}
            \end{equation*}
             Moreover, using \eqref{phi}-\eqref{varepsilon} with \eqref{qijversion2_1}-\eqref{eta_zeros} and the relation $v_i = (e_i + \eta_i + {\bar v}_{d_i})$ for $i\in\mathcal{N}$, one can write
             \begin{eqnarray}
             \left| u_{2i-1}(t) \right|  &\leq& \sum_{j\in\mathcal{N}_i^\sharp} \frac{a_{ij}}{\kappa_i} \left| \eta_j(t) \right| + \left| \delta_{2i-1}(t)\right|,\\
%
             \left|u_{2i}(t)\right| &\leq& \left|e_i(t)\right| + \sum_{j\in\mathcal{N}_i} \frac{a_{ij}}{\kappa_i} \int_{k_{ij}(t)T}^{t}
             \left|v_j(s)-\hat{v}_{d_j}(k_{ij}(t))\right| ds\nonumber\\
               &\leq& \sum_{j \in\mathcal{N}_i^\sharp} \frac{a_{ij}\cdot h^*}{\kappa_i}  \sup\limits_{\varsigma\in[k_{ij}(t)T, t]} \left| \eta_j (\varsigma)\right|+ |\delta_{2i}(t)|,
             \end{eqnarray}
        where we used Assumption \ref{AssumptionDelay01} and \eqref{h_star} to conclude that $(t-k_{ij}(t) T)\leq (k^* T + h):= h^*$, the set $\mathcal{N}_i^\sharp:=\{j\in \mathcal{N}^\sharp:\; (j,i)\in\mathcal{E}\}$ is used here due to \eqref{eta_zeros}, and
        \begin{align}
        \label{delta1}|\delta_{2i-1}(t)| = & \left| e_i(t)\right| +  \sum_{j\in\mathcal{N}_i} \frac{a_{ij}}{\kappa_i} \left| e_j(t) +\bar{v}_{d_j}(t) - {\bar v}_{d_i}(t) \right|,\\
        \label{delta2}|\delta_{2i}(t)| = & \sum_{j \in\mathcal{N}_i} \frac{a_{ij}\cdot h^*}{\kappa_i}  \sup\limits_{\varsigma\in[k_{ij}(t)T, t]}
        \left| \bar{v}_{d_j}(\varsigma) - \hat{v}_{d_j}(k_{ij}(t))\right|\nonumber\\
        &+ \left| e_i(t) \right| + \sum_{j \in\mathcal{N}_i} \frac{a_{ij}\cdot h^*}{\kappa_i} \sup\limits_{\varsigma\in[k_{ij}(t)T, t]}
        \left| e_j(\varsigma) \right|.
                         \end{align}

        Therefore, one can conclude that the input vectors $u_j$, $j\in\{ 1,\ldots,2n^\sharp\}$, satisfy the conditions of Theorem \ref{theorem001a}, where the elements of the interconnection matrix ${\mathcal M}:=  \left\{{\mu}_{lj}\right\}\in\mathbb{R}^{2n^\sharp\times n^\sharp}$ are obtained as
            $$
            \mu_{lj}=
            \left\{
            \begin{array}{ll}
            \frac{a_{ij}}{\kappa_i} & \quad \mbox{ if } l=2i-1, \; j \in\mathcal{N}_i^\sharp, \;  i \in\mathcal{N}^\sharp,\\
           \frac{a_{ij}\cdot h^*}{\kappa_i}  & \quad \mbox{ if } l=2i, \; j \in\mathcal{N}_i^\sharp, \;  i \in\mathcal{N}^\sharp,\\
           0 & \quad \mbox{otherwise},
            \end{array}\right.
            $$
        and where $\delta_{2i-1}(t)$ and $\delta_{2i}(t)$, for $i\in\mathcal{N}^\sharp$, satisfy \eqref{delta1}-\eqref{delta2}.  Note that in view of Assumption~\ref{designcond} and the result of Proposition~\ref{prop21}, we have $\delta_j(t)$, $j = \{1, \ldots, 2n^\sharp\}$ are uniformly bounded.
%

%
        Therefore, the elements of the closed-loop gain matrix $\Gamma:=\Gamma^0\cdot {\mathcal M}=\left\{ \bar{\gamma}_{ij} \right\}_{i, j\in\mathcal{N}^\sharp}$ in Theorem \ref{theorem001a} can be written as
              	\begin{equation}\label{bar_gamma_ij}
                    \bar{\gamma}_{ij}
                    =
              	\left\{
                  	\begin{array}{ll}
                      	\frac{ a_{ij} }{ \kappa_i\cdot \mu_i} \left(1+ 2 \cdot h^*\right), &
                      	~ \mbox{ if } \; j \in\mathcal{N}_i^\sharp, \;i \in\mathcal{N}^\sharp,\\
                      	0 & ~\mbox{ otherwise. }\end{array}\right.\nonumber
            	\end{equation}
        Taking into account the fact that $\bar{\gamma}_{ii} = 0$ and the elements of $\Gamma$ are nonnegative, one can conclude using Gersgorin disk Theorem that $\rho(\Gamma)<1$ if $\sum_{j=1}^n \bar{\gamma}_{ij}<1$, for $i\in\mathcal{N}$. Noting that $\sum_{j=1}^n \bar{\gamma}_{ij} = \sum_{j\in\mathcal{N}_i^\sharp} \frac{a_{ij}}{\mu_i\kappa_i} \left(1+ 2 \cdot h^*\right)$ and $\mathcal{N}_i^\sharp \subseteq \mathcal{N}_i$, the condition $\rho(\Gamma)<1$ is satisfied by \eqref{small_gain_condition01}.
  %

%
        Therefore, all the conditions of Theorem \ref{theorem001a} are satisfied and one can conclude that $\eta_i$, $\phi_i$ and $\varepsilon_i$, for $i\in\mathcal{N}^\sharp$, are uniformly bounded. In addition, the ISS property of \eqref{dot_eta}-\eqref{dot_tilde_psi} guarantees that $\dot{\eta}_i$, $\tilde{p}_i$ and $\tilde{\psi}_i$, for $i\in\mathcal{N}^\sharp$, are uniformly bounded. This with the result of Proposition~\ref{prop21} and \eqref{eta_zeros} lead to the conclusion that $v_{r_i}$ and $\dot{v}_{r_i}$, $i\in\mathcal{N}$, are uniformly bounded, and hence Assumption~\ref{designcond} guarantees that $e_i(t)\to 0$ as $t\to+\infty$, for $i\in\mathcal{N}$. Furthermore, using the result of Proposition~\ref{prop21} and the fact that $k_{ij}(t)\to + \infty$ as $t\to +\infty$, it can be verified from \eqref{delta1}-\eqref{delta2} that: $\delta_{j}(t)\to 0$ at $t\to +\infty$, $j\in\{1,\ldots , 2n^\sharp\}$ if $\mathcal{G}$ is rooted at $r\in\mathcal{L}\neq\emptyset$, or $\mathcal{G}$ is rooted in the case $\mathcal{L} = \emptyset$.
   %

%
        Consequently, one can conclude from Theorem~\ref{theorem001a} that $\eta_i(t)\to 0$, $\phi_i(t) \to 0$, $\varepsilon_i(t)\to 0$ for $i\in\mathcal{N}^\sharp$ as $t\to+\infty$. Since $(\eta_i(t) + e_i(t)) = (v_i(t) - {\bar v}_{d_i}(t)) \to 0$ as $t\to +\infty$, for $i\in\mathcal{N}$, the result of Proposition \ref{prop21} implies that $v_i(t) \to v_d$ as $t\to +\infty$ if $\mathcal{G}$ is rooted at $r\in\mathcal{L}\neq\emptyset$. The same proposition implies that $v_i(t) \to v_c$  as $t\to +\infty$, for some $v_c\in\mathbb{R}^m$, if $\mathcal{G}$ is rooted and $\mathcal{L} = \emptyset$.
%

%
         In addition, since system \eqref{dot_eta}-\eqref{dot_tilde_psi} is ISS, we have $\tilde{p}_i(t)\to 0$ and $\tilde{\psi}_i(t)\to 0$ as $t\to +\infty$ for $i\in\mathcal{N}^\sharp$. Using \eqref{tilde_q_1}, one gets $\kappa_i(\tilde{p}_i + \tilde{\psi}_i) = \sum_{j=1}^n a_{ij} (p_i -p_j)$ is uniformly bounded and $\sum_{j=1}^n a_{ij} (p_i(t) - p_j(t)) \to 0$ as $t\to+\infty$ for all $i\in\mathcal{N}^\sharp$. This with the fact that $\kappa_i = 0$ for $i\in\mathcal{N}\setminus \mathcal{N}^\sharp$ lead to the conclusion that $(\mathbf{L}\otimes \mathbf{I}_m)p(t) \to 0$ as $t\to+\infty$, where $\mathbf{L}$ is the Laplacian matrix of the interconnection graph $\mathcal{G}_w$, $\mathbf{I}_m$ is the $m\times m$ identity matrix, $p\in\mathbb{R}^{nm}$ is the vector containing all $p_i$ for $i\in\mathcal{N}$, and $\otimes$ is the Kronecker product. Finally, since $(\mathbf{L}\otimes \mathbf{I}_m)p = 0$ implies that $p_1 = \ldots = p_n$ if $\mathcal{G}_w$ contains a spanning tree \cite{Ren:Beard:2005:ieeetac}, we conclude that  $(p_i(t) - p_j(t)) \to 0$ as $t\to + \infty$, for all $i, j \in\mathcal{N}$ if $\mathcal{G}$ is rooted. The proof is complete.
\end{proof}
        Theorem \ref{theorem2} gives a solution to the synchronization problem of the class of nonlinear systems \eqref{model} with relaxed communication requirements. In fact, each agent needs to send its information to its prescribed neighbors only at some instants of time.
        This information transfer is also subject to constraints inherent to the communication channels such as irregular communication delays and packet loss. An important feature of the above result is that it gives sufficient conditions for synchronization, given in \eqref{small_gain_condition01}, that are topology-independent and can be easily satisfied with an appropriate choice of the control gains. Notice that the constant $h^*:=(k^*T+ h)$ can be easily estimated in practice, and is simply defined as the maximum blackout interval of time an individual agent does not receive information from each one of its neighbors. Then, the control gains, namely $k_i^\eta$ and $\lambda_i$, can be freely selected to satisfy \eqref{small_gain_condition01}; in particular, the variable $\mu_i$ can be made arbitrarily large, which is advantageous in the case where the parameter $h^*$ is roughly or over estimated. On the other hand, condition \eqref{small_gain_condition01} is equivalent to $0< h^* < \frac{1}{2}(\mu_i - 1)$, which specifies the maximal allowable time interval during which each agent can run its control algorithm without receiving new information from its neighbors. This allowable interval of time does not rely on some centralized information on the interconnection topology between the systems and can be made arbitrarily large. It should be also pointed out that the results of Theorem~\ref{theorem2} are obtained under mild assumptions on the interconnection graph $\mathcal{G}$. In this regard, note that condition \eqref{small_gain_condition01} is imposed for all agents $i\in\mathcal{N}^\sharp$, where, in view of the assumptions on $\mathcal{G}$, the set $\mathcal{N}\setminus\mathcal{N}^\sharp$ contains at most one element.

        \begin{remark}
                The term $\epsilon_{ij}(t)$ in \eqref{qijversion2_1} can be selected in different ways using the estimates of the desired velocity of the $i$-th agent. The choices other than \eqref{epsilon} include:
                \begin{equation}
                \epsilon_{ij}(t):= \int_{k_{ij}(t) T}^{t} \hat{v}_{d_i}(\lfloor s/T\rfloor)ds,
                                \label{epsilonA}\end{equation}
                 \begin{equation}\epsilon_{ij}(t):= \int_{k_{ij}(t) T}^{t} {\bar v}_{d_i}(s)ds,\label{epsilonB}\end{equation}
                  and
                 \begin{equation}\epsilon_{ij}(t):= {\bar v}_{d_i}(k_{ij}(t) T)\cdot (t-k_{ij}(t) T). \label{epsilonC}\end{equation}
                  In view of Proposition~\ref{prop21}, any of the choices \eqref{epsilon}, \eqref{epsilonA}-\eqref{epsilonC} can be used for our purposes.
            \end{remark}
%
%
        The control scheme in Theorem \ref{theorem2} can be applied to the case where the desired velocity is available to all systems, {\it i.e.,} $\mathcal{L}=\mathcal{N}$. In this case, the observer \eqref{updatevk_leader}-\eqref{updatevk_f2} is not needed and the following result, which can be shown following similar arguments as the proof of Theorem \ref{theorem2}, is valid.
        \begin{corollary}\label{cor1}
        Consider the network of $n$-systems \eqref{model}, where the interconnection topology is described by a directed graph $\mathcal{G}$ and the communication process between the systems satisfies Assumption~\ref{AssumptionDelay01}. Suppose that each system is controlled by a control law $u_i$ satisfying Assumption~\ref{designcond}, where the corresponding reference velocity $v_{r_i}(t)$ is defined in \eqref{v_reference}, where ${\bar v}_{d_i} \equiv v_d$, $i\in\mathcal{N}$, and $\eta_i(t)$ is obtained from \eqref{filter}-\eqref{eta_zeros} with $p^{(i)}_{j}(t):=p_j( k_{ij}(t) T)+v_d \cdot (t-k_{ij}(t) T)$, $(j,i)\in\mathcal{E}$, and $k_{ij}(t)$ is given in \eqref{k_t}. Let the control gains satisfy \eqref{small_gain_condition01}. Then, $v_i(t)\to v_d$ and $(p_i(t)-p_j(t)) \to 0$ as $t\to +\infty$ for all $i,j\in\mathcal{N}$ and for arbitrary initial conditions if $\mathcal{G}$ contains a spanning tree.
        \end{corollary}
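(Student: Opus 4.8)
The plan is to reproduce the proof of Theorem~\ref{theorem2} almost verbatim, taking advantage of the fact that setting $\bar v_{d_i}\equiv v_d$ for every $i\in\mathcal{N}$ collapses the desired-velocity-estimation layer: here $\dot{\bar v}_{d_i}\equiv 0$, there are no estimates $\hat v_{d_j}(k)$ to track, and the conclusion of Proposition~\ref{prop21} holds trivially with $v_c=v_d$. First I would note that $v_{r_i}=\eta_i+v_d\in\mathcal{C}^1$ with $\dot v_{r_i}=\dot\eta_i$ available for feedback from \eqref{filter}, so Assumption~\ref{designcond} immediately gives uniform boundedness of $e_i$. Then, with the same change of variables $\tilde p_i:=p_i-\psi_i$ and $\tilde\psi_i:=\psi_i-\tfrac1{\kappa_i}\sum_j a_{ij}p_j$ for $i\in\mathcal{N}^\sharp$, the closed loop takes exactly the form \eqref{dot_eta}--\eqref{dot_tilde_psi}, now with $\phi_i=e_i-\tfrac1{\kappa_i}\sum_j a_{ij}(v_j-v_d)$ and $\varepsilon_i=e_i+\tfrac1{\kappa_i}\sum_{j\in\mathcal{N}_i}a_{ij}(p_j-p^{(i)}_j)$, where $p^{(i)}_j(t)=p_j(k_{ij}(t)T)+v_d\,(t-k_{ij}(t)T)$. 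The estimates \eqref{estimate_tildepsi}--\eqref{estimate_eta} carry over unchanged, so each subsystem with output $\eta_i$ is IOS with respect to $(\varepsilon_i,\phi_i)$ with linear gains $2/\mu_i$ and $1/\mu_i$, and the aggregated system (outputs $y_i=\eta_i$, inputs $u_{2i-1}:=\phi_i$, $u_{2i}:=\varepsilon_i$) has the same gain matrix $\Gamma^0$ as in the proof of Theorem~\ref{theorem2}.

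Next I would re-derive the interconnection inequalities, using the identity $v_j(s)-v_d=e_j(s)+\eta_j(s)$ (valid here because $\bar v_{d_j}\equiv v_d$) in place of the more elaborate bound used for Theorem~\ref{theorem2}. Since $p_j(t)-p^{(i)}_j(t)=\int_{k_{ij}(t)T}^{t}(v_j(s)-v_d)\,ds$ and $(t-k_{ij}(t)T)\le h^*$ by Assumption~\ref{AssumptionDelay01} and \eqref{h_star}, this gives $|u_{2i-1}(t)|\le\sum_{j\in\mathcal{N}_i^\sharp}\tfrac{a_{ij}}{\kappa_i}|\eta_j(t)|+|\delta_{2i-1}(t)|$ and $|u_{2i}(t)|\le\sum_{j\in\mathcal{N}_i^\sharp}\tfrac{a_{ij}h^*}{\kappa_i}\sup_{\varsigma\in[k_{ij}(t)T,t]}|\eta_j(\varsigma)|+|\delta_{2i}(t)|$, but now with the simplified residuals $|\delta_{2i-1}(t)|=|e_i(t)|+\sum_{j\in\mathcal{N}_i}\tfrac{a_{ij}}{\kappa_i}|e_j(t)|$ and $|\delta_{2i}(t)|=|e_i(t)|+\sum_{j\in\mathcal{N}_i}\tfrac{a_{ij}h^*}{\kappa_i}\sup_{\varsigma\in[k_{ij}(t)T,t]}|e_j(\varsigma)|$, the $\bar v_{d_j}-\bar v_{d_i}$ and $\bar v_{d_j}-\hat v_{d_j}$ terms of \eqref{delta1}--\eqref{delta2} vanishing identically. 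Hence the interconnection matrix $\mathcal{M}$, and therefore $\Gamma=\Gamma^0\mathcal{M}$, coincide with those of Theorem~\ref{theorem2}: $\bar\gamma_{ij}=\tfrac{a_{ij}}{\kappa_i\mu_i}(1+2h^*)$ for $j\in\mathcal{N}_i^\sharp$ and $\bar\gamma_{ij}=0$ otherwise. As $\bar\gamma_{ii}=0$ and all entries are nonnegative, the Gershgorin disc theorem yields $\rho(\Gamma)<1$ whenever $\sum_{j\in\mathcal{N}_i^\sharp}\tfrac{a_{ij}}{\kappa_i\mu_i}(1+2h^*)<1$, which is implied by \eqref{small_gain_condition01} since $\mathcal{N}_i^\sharp\subseteq\mathcal{N}_i$.

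Then Theorem~\ref{theorem001a} applies: $\eta_i$, $\phi_i$, $\varepsilon_i$ ($i\in\mathcal{N}^\sharp$) are uniformly bounded, so by the ISS property of \eqref{dot_eta}--\eqref{dot_tilde_psi} so are $\dot\eta_i$, $\tilde p_i$, $\tilde\psi_i$; hence $v_{r_i}$ and $\dot v_{r_i}$ are globally bounded and Assumption~\ref{designcond} gives $e_i(t)\to0$. At this point the single genuine simplification relative to Theorem~\ref{theorem2} appears: because $\delta_{2i-1}$ and $\delta_{2i}$ now involve only the $e_j$'s, the facts $e_i(t)\to0$ and $k_{ij}(t)\to+\infty$ (Assumption~\ref{AssumptionDelay01}) give $\delta_j(t)\to0$ \emph{unconditionally}, with no need for a root to be a leader as in Objective~\ref{objective1}. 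Invoking the convergence part of Theorem~\ref{theorem001a} then yields $\eta_i(t)\to0$, $\varepsilon_i(t)\to0$, $\phi_i(t)\to0$; since $v_i-v_d=e_i+\eta_i$ it follows that $v_i(t)\to v_d$ for all $i\in\mathcal{N}$, and the ISS property gives $\tilde p_i(t)\to0$, $\tilde\psi_i(t)\to0$, whence $\sum_j a_{ij}(p_i(t)-p_j(t))=\kappa_i(\tilde p_i(t)+\tilde\psi_i(t))\to0$ for $i\in\mathcal{N}^\sharp$ (and trivially for $i\in\mathcal{N}\setminus\mathcal{N}^\sharp$), i.e. $(\mathbf{L}\otimes\mathbf{I}_m)p(t)\to0$; since $\mathcal{G}$ contains a spanning tree this forces $p_i(t)-p_j(t)\to0$ for all $i,j\in\mathcal{N}$. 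I do not expect a serious obstacle here — the task is purely to verify that each step of the Theorem~\ref{theorem2} argument survives the substitution $\bar v_{d_i}\equiv v_d$, the only point requiring a little care being the bookkeeping that shows the desired-velocity residuals drop out of $\delta_{2i-1},\delta_{2i}$, which is exactly what lets the graph hypothesis be relaxed to ``$\mathcal{G}$ contains a spanning tree.''
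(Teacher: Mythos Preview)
Your proposal is correct and follows exactly the approach the paper intends: the authors explicitly state that Corollary~\ref{cor1} ``can be shown following similar arguments as the proof of Theorem~\ref{theorem2},'' and you have carried this out in detail, correctly identifying that the substitution $\bar v_{d_i}\equiv v_d$ kills the $\bar v_{d_j}-\bar v_{d_i}$ and $\bar v_{d_j}-\hat v_{d_j}$ terms in \eqref{delta1}--\eqref{delta2}, which is precisely why the leader-rooted hypothesis can be relaxed to a mere spanning-tree assumption.
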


\section{Application to Euler-Lagrange systems}\label{app_example}
        In this section, we apply the proposed approach to the class of fully-actuated heterogeneous Euler-Lagrange systems. The systems dynamics are given by
        \vspace{-0.1 in}
            \begin{equation}\label{model_case1}\begin{array}{lcl}
            \dot{p}_i &=& v_i\\
                \dot{v}_i &=& M_i(p_i)^{-1}\left(u_i-C_i(p_i,v_i)v_i-G_i(p_i)\right),\end{array}\vspace{-0.1 in}
            \end{equation}
        for $i\in\mathcal{N}$, where $p_i\in\mathbb{R}^m$ is the vector of generalized configuration coordinates, $u_i\in\mathbb{R}^m$ is the vector of torques associated with the $i^{th}$ system, $M_i(p_i)$, $C_i(p_i,v_i)v_i$, and $G_i(p_i)$ are the inertia matrix, the vector of centrifugal/coriolis forces, and the vector of potential forces, respectively. The inertia matrices $M_i(p_i)$ are symmetrical and positive definite uniformly with respect to $p_i$. Other common properties of Euler-Lagrange systems (\ref{model_case1}) are as follows:
%

%
         \begin{itemize}
                    \item[P.1] The matrix $\dot{M}_i(p_i)-2C_i(p_i,v_i)$ is skew symmetric.
                    \item[P.2] There exists $k_{c_i}\ge 0$ such that $|C_i(p_i, x) y|\leq k_{c_i}|x|\cdot|y|$ holds for all $p _i,~x, ~y\in\mathbb{R}^m$. In addition, $M_i(p_i)$ and $G_i(p_i)$ are bounded uniformly with respect to  $p_i$.
                    \item[P.3] Each system in \eqref{model_case1} admits a linear parametrization of the form  $M_i(p_i)\dot{x}_{i}+C_i(p_i,v_i)x_i
                        +G_i(p_i)=Y_i(p_i, v_i, x_i, \dot{x}_i)\theta_i$,
                        where $Y_i(p_i, v_i, x_i, \dot{x}_i)$ is a known regressor matrix and $\theta_i\in\mathbb{R}^{k}$ is the constant vector of the system's parameters.
        \end{itemize}
        We assume that the systems are subject to model uncertainties; the  parameters $\theta_i$ in P.3~are unknown. We aim to achieve Objectives \ref{objective1} and \ref{objective2} for the Euler-Lagrange systems \eqref{model_case1} under a directed interconnection graph $\mathcal{G}$ and the communication constraints described in Section~\ref{section_comm}.
 %

%
        For this purpose, we consider the following control input in \eqref{model_case1}
            \begin{eqnarray}\label{control_EL}
            u_i &=& Y_i(p_i, v_i, v_{r_i}, \dot{v}_{r_i}) \hat{\theta}_i - k_i^e e_i,\\
            \label{adapt_EL}\dot{\hat{\theta}}_i &=& \Pi_i Y_i (p_i, v_i, v_{r_i}, \dot{v}_{r_i})^{\top} e_i,
            \end{eqnarray}
        for $i\in\mathcal{N}$, where the matrix $\Pi_i$ is symmetric positive definite, $Y_i$ is defined in P.3, $\hat{\theta}_i\in\mathbb{R}^k$ is an estimate of the parameters, $k_i^e>0$, and $e_i = (v_i - v_{r_i})$  with
         \begin{eqnarray}
         &&\begin{array}{ccl}v_{r_i} &=& \eta_i + {\bar v}_{d_i},\end{array}\\
                &&\left\{\begin{array}{ccl}
                            \dot{\eta}_i &=& - k_i^{\eta} \eta_i - \lambda_i (p_i - \psi_i) \\
                            \dot{\psi}_i &=& - \psi_i + {\bar v}_{d_i} + \frac{1}{\kappa_i}\sum_{j=1}^n a_{ij} p^{(i)}_{j}(t)
                            \end{array}\right., ~~i\in\mathcal{N}^\sharp,\\
                 &&\begin{array}{ccl}
                            \dot{\eta}_i &=& \eta_i \equiv 0,
                            \end{array} \qquad i\in\mathcal{N}\setminus\mathcal{N}^\sharp,
                \end{eqnarray}
            where $p^{(i)}_{j}(t):=p_j( k_{ij}(t) T)+ \hat{v}_{d_j} (k_{ij}(t))\cdot (t-k_{ij}(t) T)$, the control gains are defined as in Theorem~\ref{theorem2}, $k_{ij}(t)$ is defined in \eqref{k_t}, $\bar{v}_{d_i}$ is obtained from \eqref{hat_v_filter_leader}-\eqref{hat_v_filter} with the discrete-time observer
              \begin{eqnarray}
              \hat{v}_{d_i}(\sigma)&\equiv&  v_d, \qquad i\in {\mathcal L},\\
               \label{observer_ELsystems} \hat{v}_{d_i}(\sigma+1)&=&
                    \frac{1}{\left\bracevert{N_i(\sigma)}\right\bracevert}\sum\limits_{j\in N_i(\sigma)}\hat{v}_{d_{ij}}(\sigma) \quad ~i\in {\mathcal F},
               \end{eqnarray}
        where $\hat{v}_{d_{ij}}(\sigma)$ is given in \eqref{updatevk_f2} and $N_i(\sigma)$ is defined after \eqref{updatevk_f2}.
        Then, the following result is valid.
         \begin{corollary}\label{cor_EL}
         Consider the network of $n$ Euler-Lagrange systems \eqref{model_case1} interconnected according to $\mathcal{G}$ and suppose that Assumption~\ref{AssumptionDelay01} holds. For each system, let the control input be given in \eqref{control_EL} with \eqref{adapt_EL}-\eqref{observer_ELsystems}, and suppose condition \eqref{small_gain_condition01} is satisfied. Then, Objective 1 and Objective 2 are achieved under the conditions on the interconnection graph $\mathcal{G}$ given in Theorem~\ref{theorem2}.
         \end{corollary}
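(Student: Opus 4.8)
The plan is to recognize that Corollary~\ref{cor_EL} is, essentially, an instance of Theorem~\ref{theorem2}. Indeed, the reference-velocity construction prescribed here — the synchronization filter \eqref{filter}-\eqref{eta_zeros}, the estimate $p^{(i)}_{j}(t)=p_j(k_{ij}(t)T)+\hat{v}_{d_j}(k_{ij}(t))(t-k_{ij}(t)T)$, which is exactly the choice \eqref{epsilon}, the desired-velocity filter \eqref{hat_v_filter_leader}-\eqref{hat_v_filter}, and the discrete observer \eqref{updatevk_leader}-\eqref{updatevk_f2} via \eqref{observer_ELsystems} — coincides verbatim with the scheme studied in Theorem~\ref{theorem2}, so Proposition~\ref{prop21} applies unchanged. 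Consequently, the only thing that needs to be verified is that the adaptive torque law \eqref{control_EL}-\eqref{adapt_EL} is an admissible inner-loop controller in the sense of Assumption~\ref{designcond} (dynamic control laws being explicitly permitted there); once this is done, the two conclusions of Corollary~\ref{cor_EL} follow immediately by applying Theorem~\ref{theorem2} with the graph conditions stated there.

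To verify Assumption~\ref{designcond}, I would first derive the velocity-error dynamics. Setting $\tilde{\theta}_i:=\hat{\theta}_i-\theta_i$ and using the linear parametrization P.3 with $x_i=v_{r_i}$, the plant \eqref{model_case1} under \eqref{control_EL} yields $M_i(p_i)\dot{e}_i=Y_i\tilde{\theta}_i-C_i(p_i,v_i)e_i-k_i^e e_i$. Next I would take the Lyapunov function $V_i:=\tfrac12 e_i^{\top}M_i(p_i)e_i+\tfrac12\tilde{\theta}_i^{\top}\Pi_i^{-1}\tilde{\theta}_i$; differentiating along trajectories, the skew-symmetry P.1 cancels the $C_i$ and $\tfrac12\dot{M}_i$ contributions, and substituting the adaptation law \eqref{adapt_EL} cancels the $e_i^{\top}Y_i\tilde{\theta}_i$ cross-term, leaving $\dot{V}_i=-k_i^e|e_i|^2\le 0$. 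Since $M_i$ is symmetric positive definite uniformly in $p_i$ and $\Pi_i>0$, this bounds $e_i$ and $\tilde{\theta}_i$ uniformly on the maximal interval of existence; because $v_i=e_i+v_{r_i}$ and positions then grow at most linearly on finite intervals, the inner loop is forward complete and $e_i$ is uniformly bounded, which is the first item of Assumption~\ref{designcond}.

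For the second item I would use Barbalat's lemma. Assuming $v_{r_i}$ and $\dot{v}_{r_i}$ globally uniformly bounded, $v_i=e_i+v_{r_i}$ is bounded, and by P.2 the quantities $M_i$, $C_i(p_i,v_i)e_i$, $G_i$ and the regressor $Y_i(p_i,v_i,v_{r_i},\dot{v}_{r_i})$ are bounded along the trajectory; hence $\dot{e}_i=M_i^{-1}(Y_i\tilde{\theta}_i-C_ie_i-k_i^e e_i)$ is bounded, so $e_i$ is uniformly continuous. Integrating $\dot{V}_i=-k_i^e|e_i|^2$ gives $e_i\in L_2$, and Barbalat's lemma then gives $e_i(t)\to 0$ as $t\to+\infty$. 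This establishes Assumption~\ref{designcond} for the adaptive controller, whence Theorem~\ref{theorem2} applies: under \eqref{small_gain_condition01}, $v_i\to v_d$ and $p_i-p_j\to 0$ for all $i,j$ when $\mathcal{G}$ has a spanning tree rooted at $r\in\mathcal{L}$ (Objective~\ref{objective1}), and $v_i\to v_c$ with $p_i-p_j\to 0$ for some common $v_c$ when $\mathcal{G}$ merely has a spanning tree (Objective~\ref{objective2}).

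The Lyapunov computation itself is routine; the point requiring care is that the conclusion $e_i\to 0$ is only valid \emph{after} one knows $v_{r_i}$ and $\dot{v}_{r_i}$ are bounded, so there is an apparent circularity between the inner- and outer-loop arguments. This is not a real obstacle, because the bootstrapping is already performed inside the proof of Theorem~\ref{theorem2}: the small-gain step (Theorem~\ref{theorem001a}) first establishes uniform boundedness of all closed-loop signals — in particular of $\eta_i$, $\bar{v}_{d_i}$, $\dot{\bar v}_{d_i}$ and hence of $v_{r_i}$, $\dot{v}_{r_i}$ — using only the first item of Assumption~\ref{designcond}, and only then is the second item invoked to drive $e_i$ to zero and close the convergence argument. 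Thus the verification above slots directly into that scheme with no modification, and the additional adaptation state $\hat{\theta}_i$ causes no difficulty since it is governed by the non-increasing $V_i$.
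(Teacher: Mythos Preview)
Your proposal is correct and follows essentially the same approach as the paper: reduce the corollary to Theorem~\ref{theorem2} by verifying that the Slotine--Li adaptive law \eqref{control_EL}-\eqref{adapt_EL} satisfies Assumption~\ref{designcond}, using the Lyapunov function $V_i=\tfrac12 e_i^{\top}M_i(p_i)e_i+\tfrac12\tilde{\theta}_i^{\top}\Pi_i^{-1}\tilde{\theta}_i$ to obtain $e_i\in\mathcal{L}_2\cap\mathcal{L}_\infty$, then Barb\u{a}lat's lemma for $e_i\to 0$ once $v_{r_i},\dot v_{r_i}$ are known to be bounded. Your explicit discussion of the bootstrapping between the inner and outer loops is a welcome clarification of something the paper leaves implicit in the proof of Theorem~\ref{theorem2}.
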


         The proof of this result follows from Theorem~\ref{theorem2} by noting that $v_{r_i}\in\mathcal{C}^1$ and $\dot{v}_{r_i}$ is well defined, and the control law \eqref{control_EL}-\eqref{adapt_EL} is the standard adaptive control scheme proposed in \cite{slotine1987adaptive} for Euler-Lagrange systems that satisfies Assumption~\ref{designcond}. In fact, using the Lyapunov function $V_i= \frac{1}{2}(e_i^{\top}M_i(p_i)e_i + \tilde{\theta}_i \Pi^{-1} \tilde{\theta}_i)$, with $\tilde{\theta}_i = (\hat{\theta}_i - \theta_i)$, one can show that $e_i\in\mathcal{L}_2 \cap \mathcal{L}_{\infty}$ and $\hat{\theta}_i \in\mathcal{L}_{\infty}$, leading to the first point in Assumption~\ref{designcond}. Also, properties P.2 and P.3 guarantee that $\dot{e}_i\in\mathcal{L}_{\infty}$ if  $v_{r_i},~\dot{v}_{r_i}\in\mathcal{L}_{\infty}$. Then, invoking Barb\u{a}lat Lemma, one can conclude that if $v_{r_i},~\dot{v}_{r_i}$ are uniformly bounded, then $e_i(t)\to 0$ as $t\to +\infty$, which is the second point in Assumption~\ref{designcond}.

        \begin{remark}
        The control scheme in Corollary~\ref{cor_EL} extends the relevant literature dealing with Euler-Lagrange systems with communication constraints \cite[for instance]{chopra2006passivity, chung:2009, Nuno11, Nuno13, Wang:2013, Abdessameud:Polushin:Tayebi:2013:ieeetac} to the case where a non-zero final velocity is assigned to the team, the communication between agents is intermittent and subject to varying delays and possible packet loss, and under a directed interconnection graph that contains a spanning tree. In addition, this control scheme extends the work in \cite{Wang:flocking:2013} to the case of intermittent and delayed communication without using a centralized information on the interconnection topology. Note that in \cite{Wang:flocking:2013}, Objective~\ref{objective2} is achieved, in the case of delay-free continuous-time communication between agents, under some topology-dependent conditions.
        \end{remark}

\section{ Simulation Results }
%
%
         We provide in this section simulation results for the example in Section~\ref{app_example}. Specifically, we consider a network of ten Lagrangian systems; $\mathcal{N}=\{1,\ldots,10\}$, modeled by equations \eqref{model_case1} with $m=2$, $p_i := (p_{i_1}, p_{i_2})^{\top}$, $v_i:=( v_{i_1}, v_{i_2})^{\top}$, and
         \begin{equation}
         M_i(p_i) = \left(\begin{array}{cc}
         \theta_1 + 2 \theta_2\cos( p_{i_2}) & ~\theta_3+\theta_2\cos( p_{i_2})\\
         \theta_3+\theta_2\cos( p_{i_2}) & ~\theta_3\end{array}\right),\nonumber
         \end{equation}
         \begin{equation}
         C_{i}(p_i, v_i) = \left(\begin{array}{cc}
         -\theta_2\sin( p_{i_2}) v_{i_2} & ~-\theta_2\sin( p_{i_2})( v_{i_1}+ v_{i_2})\\
         \theta_2\sin( p_{i_2}) v_{i_1} & ~0\end{array}\right),\nonumber
         \end{equation}
         \begin{equation}
         G_i(p_i) = \left(\begin{array}{c} g\theta_5\cos( p_{i_1})+g\theta_4\cos( p_{i_1}+ p_{i_2})\\ g\theta_4\cos( p_{i_1}+ p_{i_2})\end{array}\right),\nonumber
         \end{equation}
         where $g=9.81~ \mathrm{m/sec^{2}}$, the variables $\theta_k$, $k=1,\ldots,5$, are given as: $\theta_1=(m_1l^2_{c1} + m_2(l^2_1 + l^2_{c2})+I_1+I_2)$, $\theta_2=m_2l_1l_{c2}$, $\theta_3=(m_2l_{c2}^2+I_2)$,  $\theta_4=m_2l_{c_2}$, and $\theta_5=(m_1l_{c1}+m_2l_1)$, with $m_1=m_2=1~ \mathrm{kg}$, $l_1=l_2=0.5~\mathrm{m}$, $l_{c1}=l_{c2}=0.25~ \mathrm{m}$, and $I_1=I_2=0.1~ \mathrm{kg/m^2}$.
         The parametrization satisfying property P.3 for each system is given as:
                  $Y_i(p_i, v_i, x_i, \dot{x}_i)= [Y_{i_{jk}}]\in\mathbb{R}^{2\times 5}$, with $Y_{i_{11}}= \dot{x}_{i_1}$, $Y_{i_{12}}= \cos( p_{i_2})(2 \dot{x}_{i_1}+ \dot{x}_{i_2})-\sin( p_{i_2})( x_{i_1}  v_{i_2}+  x_{i_2}( v_{i_1}+ v_{i_2}))$, $Y_{i_{13}}=  \dot{x}_{i_2}$, $Y_{i_{14}}= Y_{i_{24}}= g \cos( ^ip_i+ p_{i_2})$, $Y_{i_{15}}= g \cos( ^ip_i)$, $Y_{i_{21}}=Y_{i_{25}}=0$, $Y_{i{22}}= \dot{x}_{i_1}\cos( p_{i_2}) +  x_{i_1}  v_{i_1}\sin( p_{i_2})$, $Y_{i_{23}}= \dot{x}_{i_1}+ \dot{x}_{i_2}$, for any $x_i:=( x_{i_1}, x_{i_2})^{\top}$. The vector of estimated parameters is $\hat{\theta}=(\hat{\theta}_1, \hat{\theta}_2, \hat{\theta}_3, \hat{\theta}_4, \hat{\theta}_5)^{\top}$.
%

 %
         The systems in the network are interconnected according to the directed graph $\mathcal{G}$ given in Fig. \ref{fig:graph},
         with the node labeled $4$ being one of its roots; $r=4$. For the communication process, we set the sampling period $T = 0.1 \sec$, which means that for each $(j,i)\in\mathcal{E}$, agent $j$ can send its information to agent $i$ only at instants $kT$, $k\in\mathbb{Z}_+$. The delays and packet dropout are generated for each communication link as follows.
        For each $(j,i)\in\mathcal{E}$, and at each instant $k T$, we pick the information $\mbf{p}_j^i(\bar{k})$, where $\bar{k}\in\mathbb{Z}$ is randomly selected in the interval $[k-10, k]$. This information is then delayed by $\tau \in [0.15, 0.25]~\mathrm{sec}$ and considered as received by agent $i$. Due to the random choice of $\bar{k}$, a simple logic is implemented to avoid sending information at a future $k$ with the same $\bar{k}$. This way, the parameter $h^*$ is estimated to be $1.3~\sec$, the variable $k_{ij}(t)$, for all $t>0$, and the set $\mathcal{N}_i(\sigma)$ can be easily obtained, the information of agent $j$ at some instants $kT$ are lost (not submitted), and the information received by agent $i$ is randomly delayed. The intermittent nature of the communication process as well as varying communication delays and packet dropout are illustrated in Fig.~\ref{fig:sine:test}, which shows the received discrete-time signal $\tilde{\phi}(k)$ when the signal $\phi(t) = 2\sin(t)$ is sent according to the communication process described above.
\begin{figure}[b]
\centering
\begin{minipage}[h]{0.48\columnwidth}
\centering
\includegraphics[scale = 0.08]{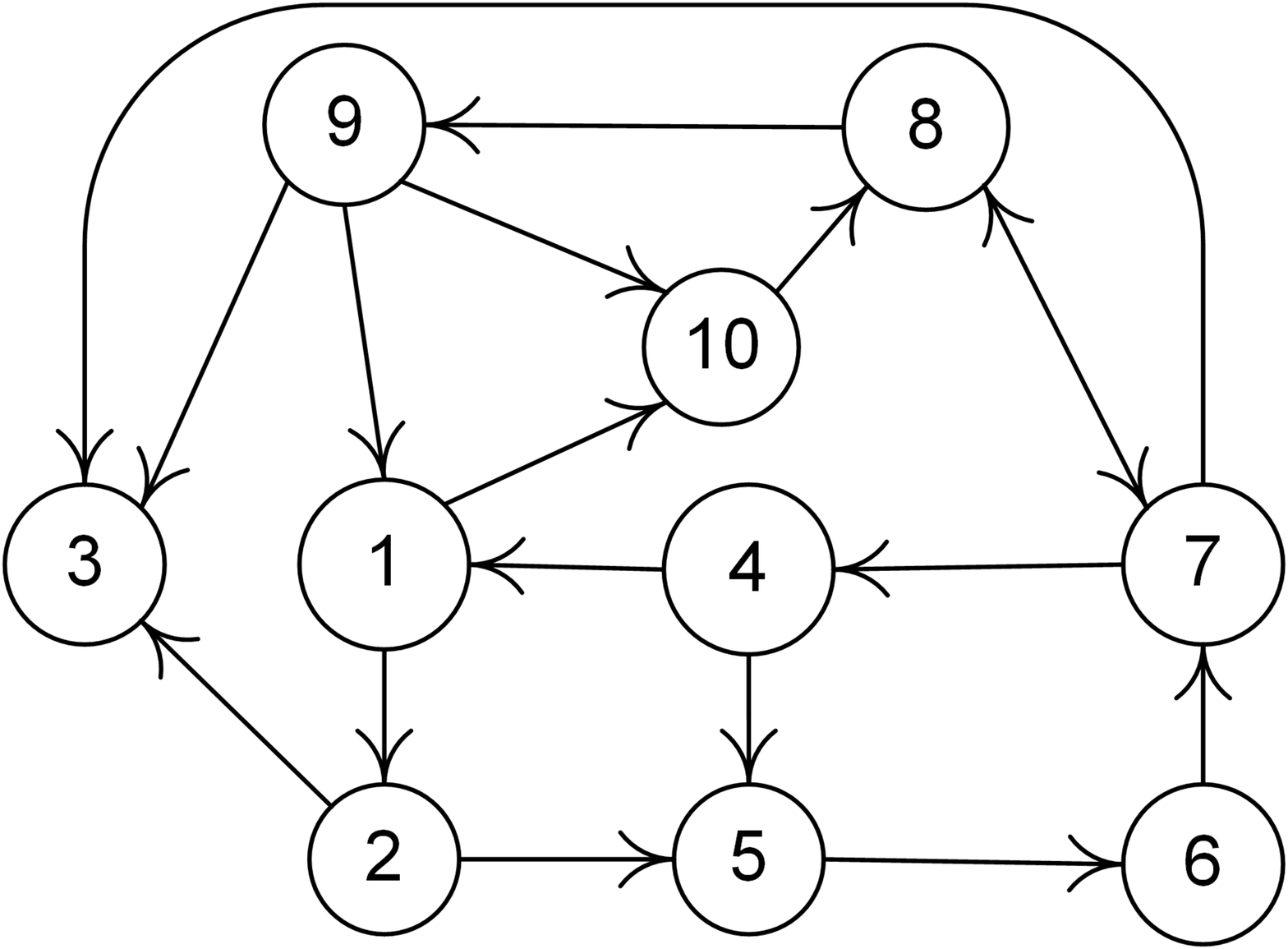}
\vspace{-0.1 in}\caption{\footnotesize interconnection graph  $\mathcal{G}$.} \label{fig:graph}
\end{minipage}
 \hfill
\begin{minipage}[h]{0.48\columnwidth}
\centering
\includegraphics[width=1\columnwidth, height = 3.2 cm]{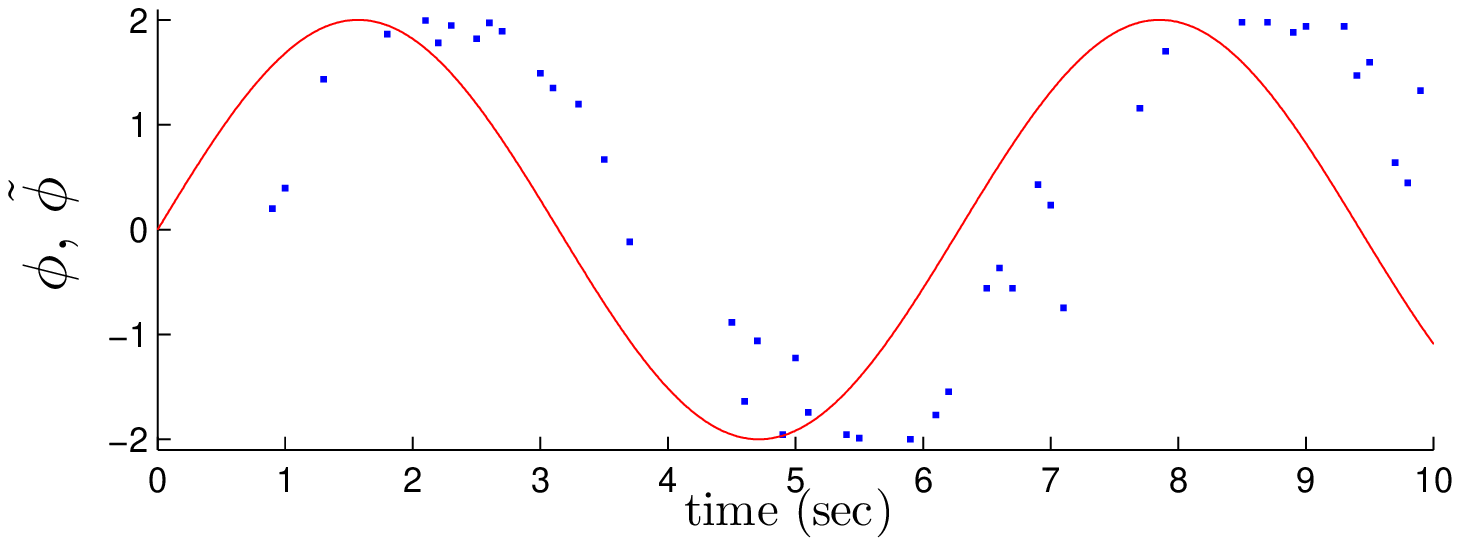}
\vspace{-0.15in}
\caption{\footnotesize Input and output of a communication channel, $\phi(t) = 2\sin(t)$.} \label{fig:sine:test}
\end{minipage}
\end{figure}

        We implement the control scheme developed for Euler-Lagrange systems in Section~\ref{app_example}. First, we consider the case where $\mathcal{L}=\{ 1, 4\}$, which indicates that the systems labeled $1$ and $4$ are the only systems having access to the desired velocity given by $v_d = (0.3, 0.6)^{\top}~\mathrm{rad/sec}$. The observer \eqref{updatevk_leader}-\eqref{updatevk_f2} is updated at $T$, and the control gains are selected as: $k_i^p = k_i^d = 2$, $\Pi_i = 0.3 \mathbf{I}_5$, $k_i^e = 10$, $\lambda_i = 13$,  $k_i^{\eta} = 2\sqrt{\lambda_i}$. Note that this choice of the gains satisfies condition \eqref{small_gain_condition01} with $\mu_i = \sqrt{\lambda_i}$. The weights of the communication links of $\mathcal{G}_w$, which is the same as $\mathcal{G}$ with assigned weights on its links, are set such that $\kappa_i = 1$.

        Fig.~\ref{fig:positions:leaders} and Fig.~\ref{fig:velocities:leaders} illustrate the relative positions and relative velocities defined as $p_{1i}= (p_{1i_1}, p_{1i_2})^{\top}:= (p_1 - p_i)$, for $i=2, \ldots, n$, and $v_{1j}= (v_{1j_1}, v_{1j_2})^{\top}:= (v_1 - v_j)$ for $j=d,~2, \ldots,~n$, where subscript `$d$' is used for the desired velocity. It is clear that all agents synchronize their positions and velocities with the desired velocity. The output of the discrete-time observer is given in Fig. \ref{fig:velocity:estimates:leaders}, with $\hat{v}_{d_i} = (\hat{v}_{d_{i_1}}, \hat{v}_{d_{i_2}})^{\top}$, where it can be seen that the desired velocity estimate of each agent converges to the desired velocity available to the leader agents.

\begin{figure}[h]
\centering
\includegraphics[width=.7\columnwidth]{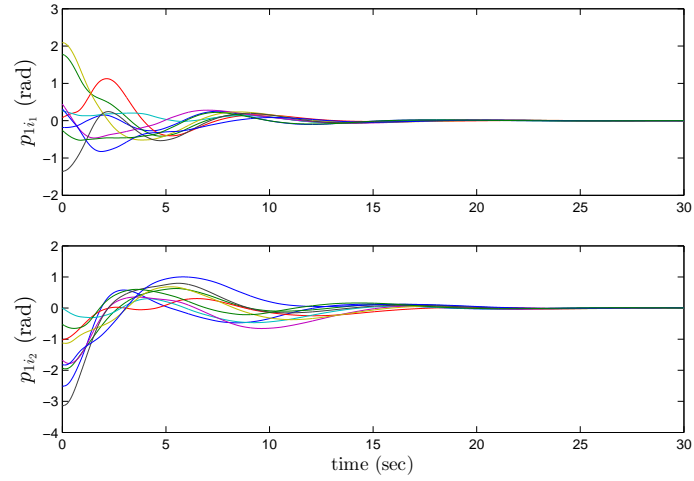}
\vspace{-0.15in}
\caption{\footnotesize Relative position vectors in the case of $\mathcal{L}=\{1, 4\}$.} \label{fig:positions:leaders}
\end{figure}

\begin{figure}[h]
\centering
\includegraphics[width=.7\columnwidth]{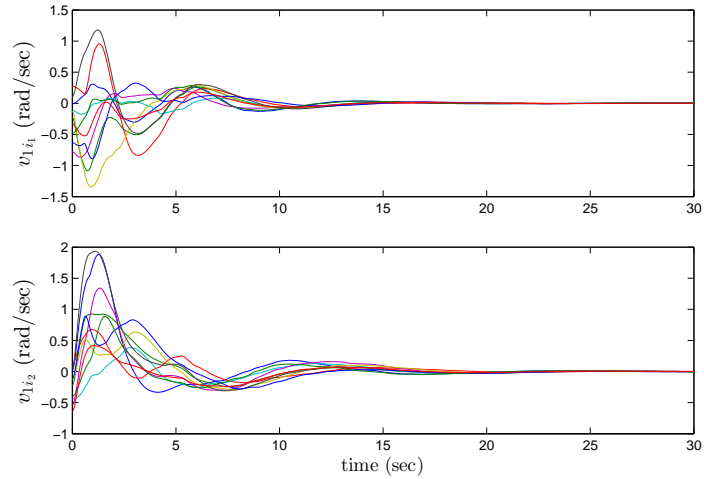}
\vspace{-0.15in}
\caption{\footnotesize Relative velocity vectors in the case of $\mathcal{L}=\{1, 4\}$.} \label{fig:velocities:leaders}
\end{figure}

\begin{figure}[h]
\centering
\includegraphics[width=.7\columnwidth]{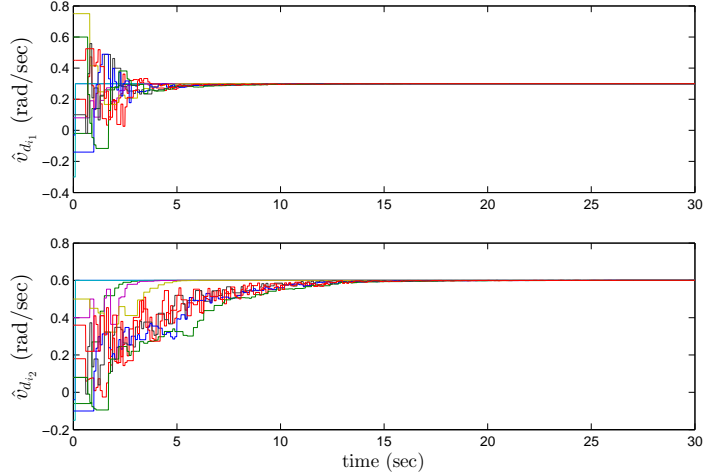}
\vspace{-0.15in}
\caption{\footnotesize Desired velocity estimates in the case of $\mathcal{L}=\{1, 4\}$.} \label{fig:velocity:estimates:leaders}
\end{figure}

Next, we consider the case where $\mathcal{L} = \emptyset$. Using the same above control parameters, the obtained results are shown in Fig.~\ref{fig:positions:leaderless}-\ref{fig:velocity:estimates:leaderless} where $v_{1j}$ is defined for $j=2,\ldots, n$. These figures show that all systems synchronize their positions, and their velocities converge to the final velocity dictated by the output of the discrete-time velocity estimator.

\begin{figure}[h]
\centering
\includegraphics[width=.7\columnwidth]{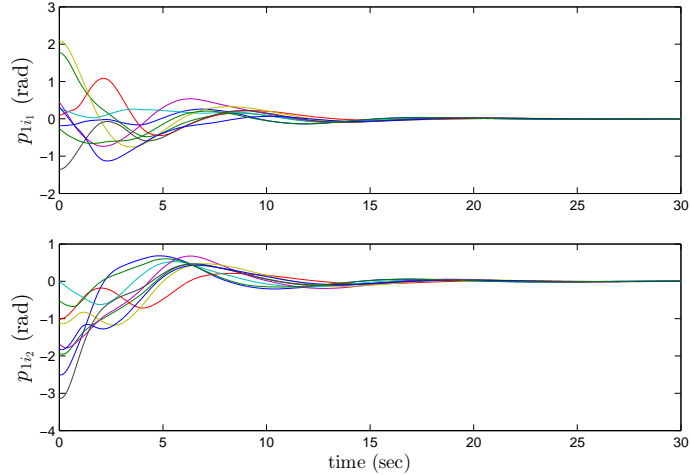}
\vspace{-0.15in}
\caption{\footnotesize Relative position vectors in the case of $\mathcal{L}  = \emptyset$.} \label{fig:positions:leaderless}
\end{figure}

\begin{figure}[h]
\centering
\includegraphics[width=.7\columnwidth]{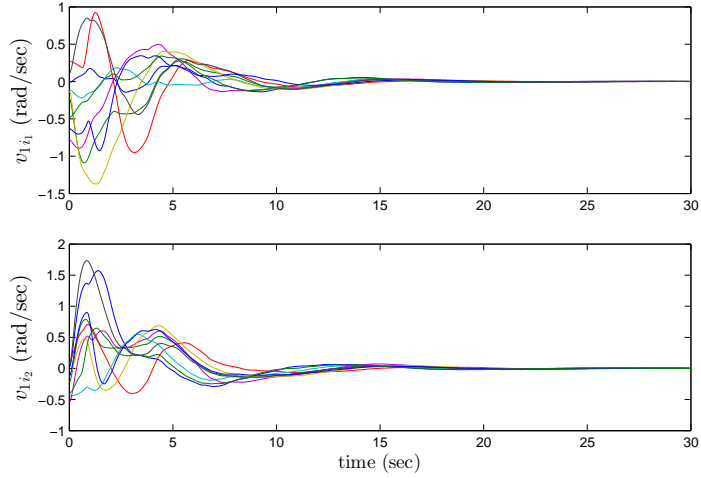}
\vspace{-0.15in}
\caption{\footnotesize Relative velocity vectors  in the case of $\mathcal{L}  = \emptyset$.} \label{fig:velocities:leaderless}
\end{figure}

\begin{figure}[h]
\centering
\includegraphics[width=.7\columnwidth]{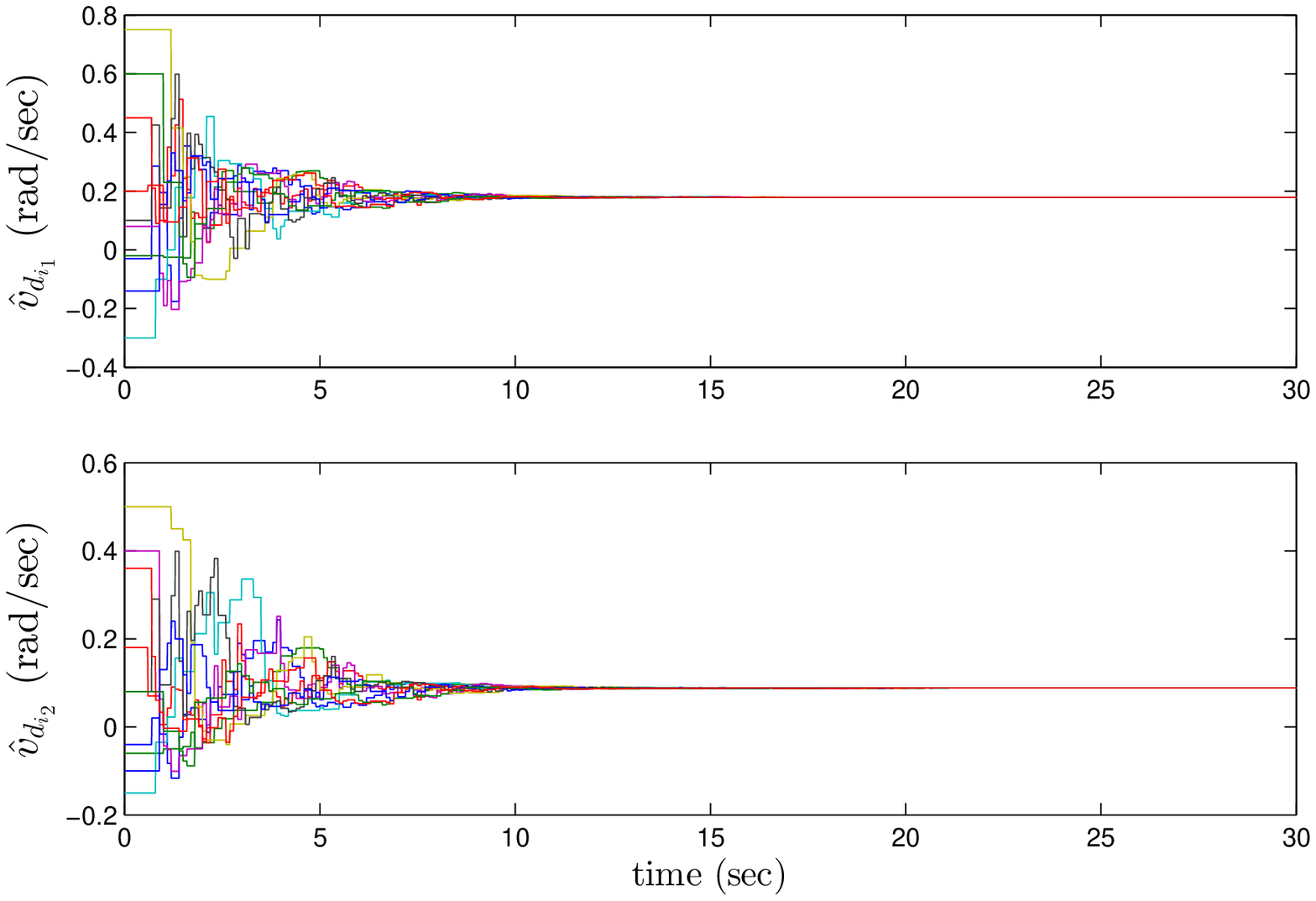}
\vspace{-0.15in}
\caption{\footnotesize Desired velocity estimates  in the case of $\mathcal{L}  = \emptyset$.} \label{fig:velocity:estimates:leaderless}
\end{figure}

\section{Conclusion}
\vspace{-0.1 in}
        We addressed the synchronization problem of second-order nonlinear multi-agent systems interconnected under directed graphs. Using the  small-gain framework, we proposed a distributed control algorithm that achieves position synchronization in the presence of communication constraints. In contrast to the available relevant literature, the proposed approach guarantees that all agents velocities match a desired velocity available to only some leaders (or a final velocity agreed upon by all agents in the leaderless case), while information exchange between neighboring agents is allowed at irregular discrete time-intervals in the presence of irregular time-delays and possible packet loss. In fact, we proved that synchronization is still achieved even if each agent in the team runs its control algorithm without receiving any information from its neighbors during some allowable intervals of time. The conditions for synchronization derived in this paper can be satisfied by an appropriate choice of the control gains.
        Future research will consider the extension of this work to the case of variable desired velocity.

\appendix

\section{Proof of Proposition~\ref{prop21}}\label{proof_prop_observer}
\vspace{-0.1in}
        Consider the consensus algorithm \eqref{updatevk_leader}-\eqref{updatevk_f2}. The interaction between the agents in the system \eqref{updatevk_leader}-\eqref{updatevk_f2} is described by a directed graph $\mathcal{G}_s = (\mathcal{N}, \mathcal{E}_s)$, which can formally be obtained from the graph $\mathcal{G}$ by modifying some of the its links, as follows: (1) removing the incoming arcs to each leader node (or agent), (2) adding a directed link from any leader node to any other leader node, and (3) adding a self arc to each node in the graph. It is straightforward to verify that, if the directed graph $\mathcal{G}$ is rooted at $r\in\mathcal{L}$, then $\mathcal{G}_s$ is also rooted at $r\in\mathcal{L}$. In the case of no leaders (${\mathcal L}=\emptyset$), the above modifications reduce to adding a self arc to each node; in this case, it is trivial that $\mathcal{G}_s$ is rooted if $\mathcal{G}$ is rooted.

In view of the above discussion, the consensus algorithm \eqref{updatevk_leader}-\eqref{updatevk_f2} can be formally written as
            \begin{equation}
                     \hat{v}_{d_i}(\sigma+1)=\frac{1}{\left\bracevert{\bar{N}_i(\sigma)}\right\bracevert}\sum\limits_{j\in \bar{N}_i(\sigma)}\hat{v}_{d_j}(\sigma-\hat\tau^{(j,i)}(\sigma)),
                                    \label{updatevk02}
              \end{equation}
        for all $i\in {\mathcal N}$, where\vspace{-0.2in}
                \begin{equation}\label{N_inproof}\bar{N}_i(\sigma)=\left\{ \begin{array}{ll}
                {N}_i(\sigma)& \quad \mbox{ for}~ i\in{\mathcal F},\\ {\mathcal L}& \quad \mbox{ for}~ i\in {\mathcal L},\end{array}\right.
                \end{equation}
        and $\hat\tau^{(j,i)}(\sigma)$ is a delay that takes some integer value at $\sigma T$ and, in view of Assumption \ref{AssumptionDelay01} and \eqref{h_star}, satisfies $\hat\tau^{(j,i)}(\sigma)\le h_\sigma^*:=\left\lceil h^*/T\right\rceil$ for all $\sigma = 0, 1, \ldots$. Note that $\hat\tau^{(i,i)}(\sigma) = 0$, and $\hat\tau^{(j,i)}(\sigma) = 0$ for all $\sigma$ if $i,j\in\mathcal{L}$.

        Let $\mathcal{G}_s(\sigma)=(\mathcal{N}, \mathcal{E}_s(\sigma))$, where $(j,i)\in \mathcal{E}_s(\sigma)$ only if $j \in \bar{N}_i(\sigma)$, which defines the set of those agents whose information is used in the update rule of agent $i$ at instants $\sigma T$.
        It is clear that $\mathcal{G}_s(\sigma)$ is a directed graph with at most one directed link connecting each ordered pair of distinct nodes and with exactly one self arc at each node.

        According to Theorem 2 in \cite{Cao:etal:2008:2:SIAMJCO}, the states of \eqref{updatevk02} satisfy $\hat{v}_{d_i}(\sigma)\to v_c$ exponentially as $\sigma\to +\infty$, $i\in\mathcal{N}$, for some $v_c \in\mathbb{R}^m$, if the sequence of graphs $\mathcal{G}_s(0),\mathcal{G}_s(1), \ldots $ is repeatedly jointly rooted. We claim that the latter condition is satisfied under the assumptions of Proposition~\ref{prop21}. To show this, pick an arbitrary $\rho \in \mathbb{Z}_+$ and consider the composition of graphs $\bar{\mathcal{G}}_s(\rho) = \mathcal{G}_s(\rho+h^*_\sigma)\circ {\mathcal G}_s(\rho+h^*_\sigma-1)\circ\ldots\circ{\mathcal G}_s(\rho)$. Since $\mathcal{G}_s(\rho)$ contains self arcs on each node, for all $\rho$, the edges of $\mathcal{G}_s(\rho+h^*_\sigma)$, ${\mathcal G}_s(\rho+h^*_\sigma-1)$, $\ldots, {\mathcal G}_s(\rho)$ are also edges in $\bar{\mathcal{G}}_s(\rho)$.

        Consider first the case where $\mathcal{L} \neq \emptyset$ and $\mathcal{G}$ is rooted at $r\in\mathcal{L}$. Equation \eqref{N_inproof} implies that ${\mathcal G}_s(\rho)$ contains a directed link from any leader node to any other leader node, for all $\rho$. In addition,
        in view of the definition of $\mathcal{G}_s=(\mathcal{N}, \mathcal{E}_s)$, Assumption \ref{AssumptionDelay01} implies that
        for each $i\in\mathcal{F}$ and $(j,i)\in \mathcal{E}_s$, $j\neq i$, the information $\hat{v}_{d_j}$ is successfully delivered to agent $i$ at least once per $h^*_\sigma:=\left\lceil h^*/T\right\rceil$ sampling periods. Therefore, it can be verified that, for each $i\in\mathcal{F}$, if $(j,i)$ is an edge in $\mathcal{G}_s$, then $(j,i)$ is also an edge in the composition of graphs $\bar{\mathcal{G}}_s(\rho)$. In fact, if $(j,i)\in\mathcal{E}_s$, the definition of $N_i(\sigma)$ implies that $(j,i)$ is an edge in at least one of the graphs $\mathcal{G}_s(\rho)$, ${\mathcal G}_s(\rho+1)$, $\ldots$, ${\mathcal G}_s(\rho+h^*_\sigma)$. Consequently, one can conclude that all the edges of $\mathcal{G}_s$ are also edges in the composition of graphs $\bar{\mathcal{G}}_s(\rho)$, and therefore, $\bar{\mathcal{G}}_s(\rho)$ is rooted at $r\in\mathcal{L}$  since $\mathcal{G}_s$ is rooted at $r\in\mathcal{L}$. As a result, the sequence of graphs $\mathcal{G}_s(0),\mathcal{G}_s(1), \ldots ~$ is repeatedly jointly rooted. Similar arguments can be used to show that $\bar{\mathcal{G}}_s(\rho)$ is rooted in the case where $\mathcal{L} = \emptyset$ and $\mathcal{G}$ contains a spanning tree, and hence rooted.

        Now, since the states of the leaders are fixed, {\it i.e.,} $\hat{v}_{d_i}(\sigma)\equiv v_d$ for all $\sigma$ and $i\in\mathcal{L}$, one can  conclude that $v_c = v_d$ in the case where $\mathcal{L}\neq \emptyset$. The rest of the proof follows in view of the dynamics \eqref{hat_v_filter}, for $i\in\mathcal{F}$, which can be rewritten as $\ddot{\tilde{\epsilon}}_i = -k_i^d \dot{\tilde{\epsilon}}_i - k_i^p \tilde{\epsilon}_i + k_i^p (\hat{v}_{d_i}(\lfloor t/T \rfloor) - v_c)$, $\tilde{\epsilon}_i := {\bar v}_{d_i} - v_c$, and describe the dynamics of an asymptotically stable system with an exponentially convergent perturbation term.


\end{document}